\theoremstyle{plain}
\newtheorem{Theorem}{Theorem}
\newtheorem*{Theorem*}{Theorem}
\newtheorem{Lemma}{Lemma}
\newtheorem{Corollary}{Corollary}
\newtheorem*{Cor*}{Corollary}
\theoremstyle{definition}
\newtheorem{Definition}{Definition}
\newtheorem{Remark}{Remark}
\newtheorem{Example}{Example}
\newtheorem{Example*}{Example}
\newtheorem*{Remark*}{Remark}
\newtheorem*{Conjecture*}{Conjecture}
\DeclareMathOperator{\Conf}{Conf}
\DeclareMathOperator{\End}{End}
\DeclareMathOperator{\Hom}{Hom}
\DeclareMathOperator{\SO}{SO}
\DeclareMathOperator{\Id}{id}
\DeclareMathOperator{\vol}{vol}
\DeclareMathOperator{\Jac}{Jac}
\DeclareMathOperator{\Iso}{Iso}
\renewcommand{\Im}{\operatorname{Im}}
\renewcommand{\Re}{\operatorname{Re}}
\DeclareMathOperator{\dbar}{\bar\partial}
\DeclareSymbolFont{bbold}{U}{bbold}{m}{n}
\DeclareSymbolFontAlphabet{\mathbbold}{bbold}
\newcommand{\R}{\mathbb{R}}
\newcommand{\C}{\mathbb{C}}
\newcommand{\N}{\mathbb{N}}
\newcommand{\Z}{\mathbb{Z}}
\renewcommand{\H}{\mathbb{H}}
\newcommand{\ii}{\mathbbm i}
\newcommand{\jj}{\mathbbm j}
\newcommand{\D}{\mathcal D}
\newcommand{\spin}{\Sigma}
\begin{document}

\title{Finding conformal and  isometric immersions of surfaces}

\author{Albert Chern}
\author{Felix Kn{\"o}ppel}
\author{Franz Pedit}
\author{Ulrich Pinkall}
\author{Peter Schr{\"o}der}



\date{\today}
\thanks{Authors supported by SFB Transregio 109 ``Discretization in Geometry and Dynamics'' at Technical University Berlin. Third author partially supported by an RTF grant from the University of Massachusetts Amherst. Fifth author partially supported by the Einstein Foundation. Software support for images provided by SideFX. We thank Stefan Sechelmann for the abstract hyperbolic triangulated surface used in Figure~1.}

\maketitle


\section{Introduction}
The notion of an abstract Riemannian manifold raises the question of whether every such manifold can be isometrically realized as a submanifold of Euclidean space. This problem has  been given an affirmative answer in the smooth category by Nash \cite{Nash:1954:CII}, provided that the codimension of the submanifold is sufficiently large. If one asks the more specific question of whether a given 2-dimensional Riemannian manifold $(M,g)$  can be isometrically immersed into Euclidean $3$-space, not too much is known.  There are  general local existence results  for real analytic metrics \cite{Spivak:1975:CIDG} and for smooth metrics under certain curvature assumptions \cite{Lin:1986:LIE}. Non-existence results are easier to come by: for instance,  Hilbert's classical result that the hyperbolic plane does not admit an  isometric immersion  into  $\R^3$, or the fact that a compact non-positively curved $2$-dimensional Riemannian manifold cannot be isometrically immersed into $\R^3$. 

\begin{figure}[h]
	\center
	\includegraphics[width=.85\textwidth]{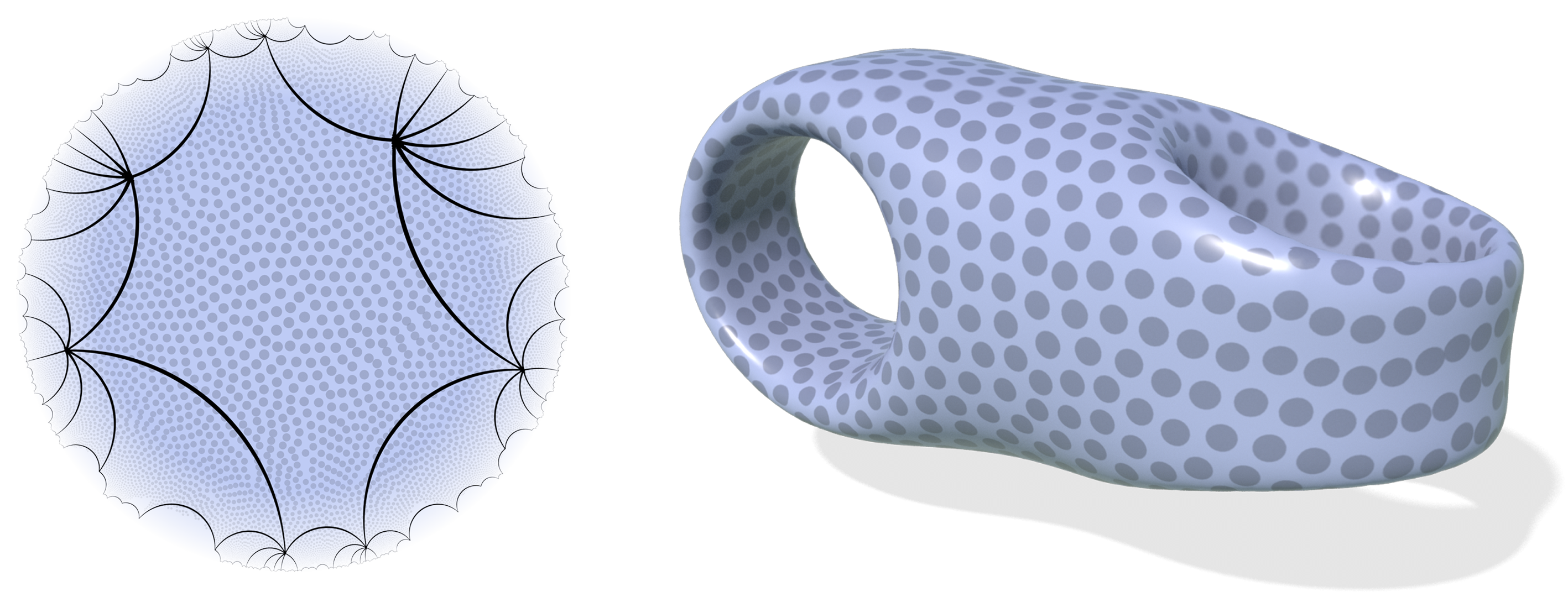}
	\caption{\label{fig:genus2-hyperbolic}
	A smooth, almost isometric immersion of the Riemannian surface of genus 2 with constant curvature shown on the left found by the algorithm in \cite{Chern:2018:shape}.
	}
\end{figure}

Surprisingly though, if one relaxes the smoothness of the immersion, every $2$-dimen\-sional Riemannian manifold $(M,g)$ admits a $C^1$-isometric immersion $f\colon M\to \R^3$  into Euclidean space \cite{Nash:1954:CII,Kuiper:1955:CII,Gromov:1986:PDR}.  Unfortunately, neither the original existence proofs nor the recent explicit constructions of such isometric immersions~\cite{Borrelli:2012:FTT,Borrelli:2013:IES}  reflect much of the underlying geometry of $(M,g)$, as shown in Figure~\ref{fig:C1-flattorus}.

On the other hand, there are piecewise linear embeddings of a flat torus which make visible its intrinsic geometry (see Figure~\ref{fig:linear-flatorus}). 
In a more general vein, one could attempt to find isometric immersions in the class of piecewise smooth immersions  $f\colon M\to \R^3$, that is, local topological embeddings whose restrictions to the closed faces of a triangulation of $M$ are smooth. Experiments carried out with a recently developed numerical algorithm~\cite{Chern:2018:shape} provide support of the following.
\begin{Conjecture*}
Given a Riemannian surface $(M,g)$, there exists a piecewise smooth isometric immersion $f\colon M\to \R^3$  in each regular homotopy class. 
\end{Conjecture*}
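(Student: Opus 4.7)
The plan is to combine an h-principle construction with explicit piecewise linear pleating, in the spirit of Nash--Kuiper but adapted to the piecewise smooth category.

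\textbf{Initial immersion.} By the Smale--Hirsch h-principle, fix a smooth immersion $f_0\colon M\to\R^3$ representing the prescribed regular homotopy class. After replacing $f_0$ by $\epsilon f_0$ with $\epsilon>0$ sufficiently small, the induced metric $g_0:=f_0^*\langle\cdot,\cdot\rangle$ is strictly dominated by $g$, that is, $h:=g-g_0$ is a positive definite smooth symmetric $2$-tensor on $M$. The task then reduces to deforming $f_0$ by a $C^0$-small map that adds exactly $h$ to the induced metric; the regular homotopy class will be preserved automatically.

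\textbf{Piecewise smooth closing of the defect.} Fix a sufficiently fine triangulation $T$ of $M$. On each closed triangle $F$ the defect $h|_F$ is nearly constant and can be written as a positive sum $h|_F=\sum_i a_i(p)^2\, d\phi_i\otimes d\phi_i$ of rank-one tensors supported on families of parallel curves $\{\phi_i=\text{const}\}$. Whereas the Nash--Kuiper construction realizes such a rank-one summand by a high-frequency sinusoidal corrugation transverse to the curve family, a single piecewise linear pleat along each family achieves the same effect \emph{exactly}, with no residual defect and only a finite increase in the number of smooth pieces. On the interior of $F$ the pleated map is smooth off the pleat lines, and hence a piecewise smooth immersion in the sense of the statement.

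\textbf{Global assembly and the main obstacle.} The technical heart of the argument, and the reason this is stated as a conjecture rather than a theorem, is the global bookkeeping required to make the per-triangle constructions fit together into a single local topological embedding. The pleats on adjacent triangles must agree along the shared edge not only as curves in $M$ but as their images in $\R^3$; at every vertex of $T$ the pleated pieces must close up without introducing a cone singularity. The plan is a Nash-style iteration combined with adaptive refinement: design the pleats inductively across a spanning tree of the dual graph of $T$, propagate the induced edge geometry to the remaining edges, and subdivide $T$ further wherever the accumulated mismatch threatens immersivity or local injectivity. Preservation of the regular homotopy class is automatic because each pleating is a $C^0$-small deformation; it is the global coordination of the pleats around every vertex, in an arbitrary metric $g$, that constitutes the main obstacle, and is precisely what the numerical experiments of \cite{Chern:2018:shape} suggest but do not yet prove.
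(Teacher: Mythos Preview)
The statement is a \emph{conjecture}; the paper does not prove it. What the paper offers instead is a variational heuristic: it encodes the regular homotopy class as a spin bundle $L\to M$, rewrites the closedness of the candidate derivative $(\psi,\psi)$ as a non-linear Dirac equation, and proposes the family $E_\epsilon$ with the Willmore term as regularizer. The only evidence given for the conjecture is numerical, via the algorithm of \cite{Chern:2018:shape}. So there is no ``paper's own proof'' to compare against, and your proposal---an h-principle plus explicit pleating construction---is a genuinely different line of attack from anything the paper attempts.

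That said, your outline is not a proof either, and you say as much. Two concrete gaps deserve naming. First, the claim that ``a single piecewise linear pleat along each family achieves the same effect \emph{exactly}'' is not justified: a zigzag pleat with fixed fold angle adds a \emph{constant} rank-one tensor, whereas your coefficients $a_i(p)$ vary over the triangle. Matching a variable $a_i$ exactly forces either a varying fold angle (no longer piecewise linear along the pleat) or infinitely many pleats, and in either case you are back to an approximation-plus-iteration scheme rather than a one-shot exact step. Second, and as you acknowledge, the global assembly around vertices is the entire difficulty; propagating edge data along a spanning tree of the dual graph leaves $2g$ independent cycle constraints (for genus $g$) plus all the vertex closing conditions, and nothing in the outline explains why adaptive refinement should resolve these rather than merely postpone them. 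Until at least one of these two steps is made rigorous, the proposal remains a strategy, which is exactly the status the paper assigns to the statement.
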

The added detail---to realize a given intrinsic geometry within a prescribed regular homotopy class---is advantageous in applications to computer graphics~\cite{Chern:2018:shape} and also for the theoretical approach to the isometric immersion problem.  It is the latter which will be discussed in this paper. Our objective is to rephrase the isometric immersion problem of an oriented Riemannian surface $(M,g)$ into Euclidean space $\R^3$ as a variational problem with parameters whose minima, if they were to exist, converge (for limiting parameter values) to isometric immersions $f\colon M\to \R^3$ in a given regular homotopy class.  As was pointed out already, for a generic metric $g$ there will be no smooth isometric immersion into $\R^3$, let alone one within a prescribed regular homotopy class. But experiments with the aforementioned algorithm ~\cite{Chern:2018:shape} give some credence to our conjecture that there should be  minima in the larger class of piecewise smooth immersions.  Adjusting the parameters in our functional, the Willmore energy $\int H^2$, the averaged squared mean curvature of the immersion, is one of its contributors and hence immersions close to a minimizer will avoid excessive creasing.  This has the effect that potential minimizers of our functional reflect the intrinsic geometry of $(M,g)$ well, in contrast to the $C^1$-isometric immersions by Nash and Kuiper ~\cite{Nash:1954:CII,Kuiper:1955:CII,Gromov:1986:PDR}.

In order to explain our approach in more detail, we first relax the original problem to that of  finding a {\em conformal} immersion $f\colon M\to\R^3$ of a compact {\em Riemann surface} $M$ in a given regular homotopy class. It is known that such a conformal immersion always exists in the smooth category ~\cite{Garsia:1962:ASES,Ruedy:1971:EORS}, and hence our variational problem will have a minimizer if we turn off the contribution from the Willmore energy. But keeping the Willmore energy in the functional has the effect that potential minimizers will minimize the  Willmore energy in a given conformal and regular homotopy class, that is, will be constrained Willmore minimizers. There are partial characterizations of constrained Willmore minimizers when $M$ has genus one ~\cite{KuwSch:2001:WFSIE, SchNdy:2014:CCWM, HelNdy:2017:ECWM}, but hardly anything---besides existence ~\cite{KuwSch:2001:WFSIE} if the Willmore energy is below $8\pi$---is known in higher genus, even though there are some conjectures \cite{HelPed:2017:TCWC}. One of our future goals is to develop a discrete algorithm based on the approach outlined in this paper to find conformal immersions of a  compact Riemann surface in a fixed regular homotopy class minimizing the Willmore energy.
 
Given a (not necessarily conformal)  immersion $f\colon M\to\R^3$, we can decompose its derivative $df\in\Omega^1(M,\R^3)$ uniquely into 
$df=\omega\circ B$
where  $\omega\in \Gamma(\Conf(TM,\R^3))$ is a conformal, nowhere vanishing $\R^3$-valued $1$-form and  $B\in\Gamma(\End(TM))$ is a positive, self-adjoint (with respect to any conformal metric) endomorphism with  $\det B=1$. Obviously $B=\Id$ if and only if $f$ is conformal. The space of conformal $1$-forms $\Conf(TM,\R^3)$ is a principal bundle with stretch rotations $\R_{+}{\bf SO}(3)$ as a structure group acting from the left. In particular, any two $\omega,\,\tilde{\omega}\in \Gamma(\Conf(TM,\R^3))$ are related via $\tilde{\omega}= h\, \omega$ for a unique $h\colon M\to  \R_{+}{\bf SO}(3)$. Two immersions $f,\,\tilde{f}\colon M\to\R^3$ are regularly homotopic if and only if their derivatives $df$ and $d\tilde{f}$ are homotopic~\cite{Smale:1959:CIT,Hirsch:1959:IOM}. The space of positive, self-adjoint bundle maps $B\in\Gamma(\End(TM))$ with $\det B=1$ is contractible, and we obtain  the equivalent reformulation that $f,\, \tilde{f}\colon M\to\R^3$ are regularly homotopic if and only if their corresponding $\omega$ and $\tilde{\omega}$ are homotopic in $\Gamma(\Hom(TM,\R^3))$. 

As will be detailed in Section~\ref{sec:spinbundles}, any nowhere vanishing $\omega\in  \Gamma(\Conf(TM,\R^3))$ induces a spin bundle $L\to M$ and $\omega=(\psi,\psi)$ for a unique (up to sign) nowhere vanishing section $\psi\in\Gamma(L)$ where $(\cdot,\cdot)\colon L\times L\to \Hom(TM,\H)$ denotes the spin pairing. Since homotopic $\omega\in  \Gamma(\Conf(TM,\R^3))$ give rise to isomorphic spin bundles, we obtain a description of regular homotopy classes of immersions via isomorphism classes of their induced spin bundles $L\to M$. If the genus of $M$ is $p$, there are $2^{2p}$ many non-isomorphic spin bundles and hence  $2^{2p}$ many regular homotopy classes of immersions $f\colon M\to \R^3$ (see Figure~\ref{fig:tori-spinstructures}). 

\begin{figure}[b]
	\center
	\includegraphics[width=.9\textwidth]{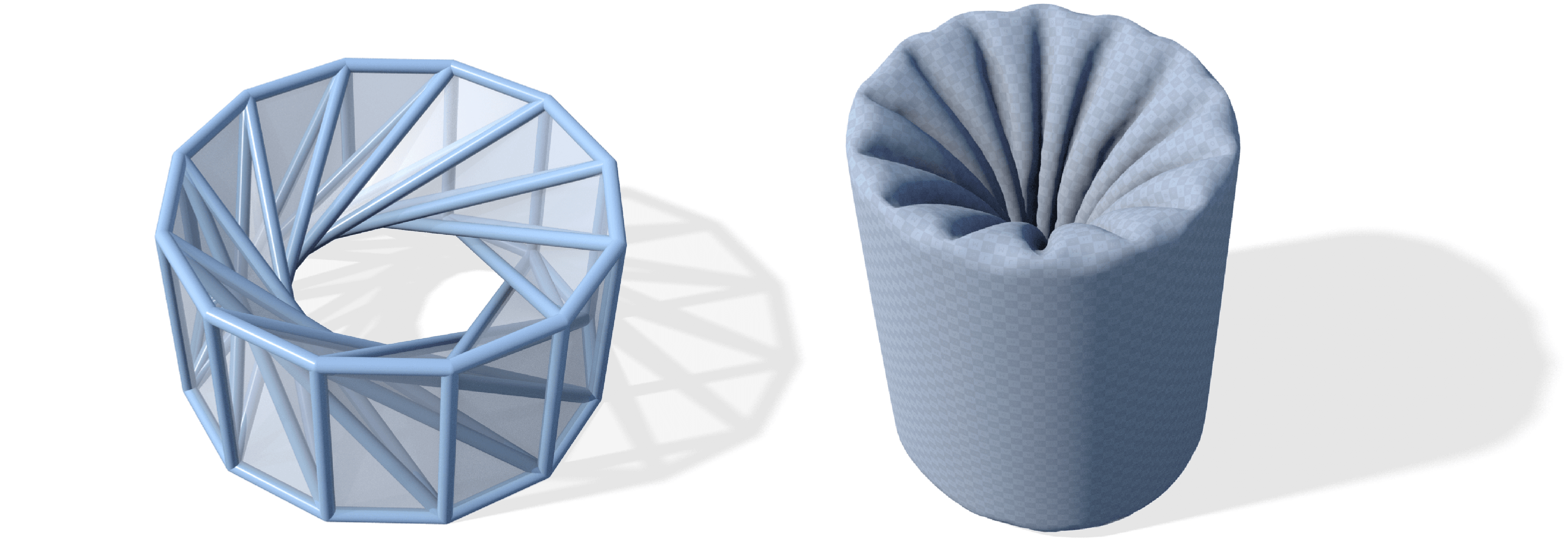}
	\caption{\label{fig:linear-flatorus} The angular defects of the embedded, piecewise linear torus shown on the left are the same at all vertices and their sum is zero, implying  that the induced metric is flat. The right image shows a smooth, almost isometric immersion of another flat torus found by the algorithm in \cite{Chern:2018:shape}.}
\end{figure}

Fixing  a regular homotopy class, that is, a spin bundle $L\to M$, our aim is to find a nowhere vanishing section $\psi\in\Gamma(L)$ in such a way that the $\R^3$-valued conformal $1$-form 
$\omega=(\psi,\psi) \in \Gamma(\Conf(TM,\R^3))$ is exact. In this case, the primitive $f\colon M\to \R^3$ of $\omega=df$ is a conformal immersion in the given regular homotopy class. We show (see also \cite{icmpaper, klassiker}) that the closedness of $\omega=(\psi,\psi)$ is equivalent  to the non-linear Dirac equation
\begin{equation}\label{eq:Dirac}
\dbar \psi +\tfrac{1}{2}H J\psi (\psi, \psi)=0,
\end{equation}
where $\dbar$ is the Dirac structure (see Lemma~\ref{lem:Diracstructure}) on the spin bundle $L$. The function $H\colon M\to \R$ is the mean curvature, calculated  with respect to the induced metric $|df|^2$, of the resulting conformal  immersion $f\colon \tilde{M}\to \R^3$ on the universal cover with translation periods.  

As we shall discuss in Section~\ref{sec:variational}, the Dirac equation~\eqref{eq:Dirac} can be given a variational characterization: for non-negative coupling constants $\epsilon=(\epsilon_1,\epsilon_2,\epsilon_3)$, we consider the family of variational problems $E_{\epsilon}\colon \Gamma(L^{\times})\to\R$ on nowhere vanishing sections of $L$ given by 
\begin{equation}\label{eq:functional}
E_{\epsilon}(\psi)=\epsilon_1\int_M \tfrac{\langle*\dbar\psi\wedge\dbar\psi\rangle}{|\psi|^2}+
(\epsilon_2-\epsilon_1)\int_M\tfrac{{\langle *\dbar\psi\wedge\psi(\psi,\psi)\rangle}^2}{|\psi|^4}+
(\epsilon_3-\epsilon_1)\int_M\tfrac{{\langle *\dbar\psi\wedge J\psi(\psi,\psi)\rangle}^2}{|\psi|^4}\,.
\end{equation}
Here  $|\cdot|^2\colon L\to |K|$ denotes the half-density valued quadratic form $|\psi|^2=|(\psi,\psi)|$ on $L$ and 
$\langle \cdot, \cdot \rangle\colon L\to |K|$ is the half-density valued inner product obtained via polarization.  The complex structure $*$ on $TM^{*}$ is  the {\em negative} of the Hodge-star on $1$-forms.
It is worth noting that the functional $E_{\epsilon}$ is conformally invariant, that is, well-defined on the Riemann surface $M$, and independent on constant scalings of $\psi$. 
In particular, we could normalize $\psi$  by restricting to the $L^4$-sphere of sections satisfying  $\int_M|\psi|^4=1$. 

The last integral  in \eqref{eq:functional} turns out to be the Willmore functional $\int_M H^2|\psi|^4$ and the first two integrals measure, in $L^2$, the failure of the non-linear Dirac equation~\eqref{eq:Dirac} to hold. Thus, for $\epsilon_3=0$ and $\epsilon_1, \epsilon_2>0$, the functional attains its minimum value $E_{\epsilon}(\psi)=0$ at nowhere vanishing sections $\psi$ which correspond to---in general rather singular---conformal immersions. 
It is therefore conceivable that minimizers of $E_{\epsilon}$ for $\epsilon_3>0$, which has the effect of keeping the Willmore energy as a regularizer, will converge as $\epsilon_3$ tends to zero to  smooth conformal immersions of $M$ minimizing the Willmore energy, that is, constrained Willmore surfaces.
Since the Dirac equation only guarantees the closedness of $(\psi,\psi)$, the resulting conformal immersion given by $df=(\psi,\psi)$ generally will have translation periods which are controlled by adding the squared lengths of the period integrals $|\int_{\gamma} (\psi,\psi)|^2$  to the functional~\eqref{eq:functional}.


As it turns out, this strategy works surprisingly well~\cite{Chern:2018:shape} when searching for isometric immersions $f\colon M\to\R^3$ of a compact, oriented Riemannian surface $(M,g)$. Since the conformal class of $g$ gives $M$ the structure of a Riemann surface, we can consider the family of functionals~\eqref{eq:functional} with the additional  {\em isometric constraint} 
\[
|\psi|^4=g\,.
\]
 Then the resulting minimizers under the above described  procedure will be isometric immersions $f\colon M\to\R^3$ whose Willmore energy is ``small''.  The resulting surfaces provide examples of how piecewise smooth, and sometimes even smooth, isometric immersions of compact Riemannian surfaces might look, as can be seen in  Figures~\ref{fig:genus2-hyperbolic}, \ref{fig:linear-flatorus}, and \ref{fig:genus2-abeljac}. 

We should point out that spinorial descriptions of surfaces have been applied to a variety of problems, both  in the discrete~\cite{Crane:2011:STD,Crane:2013:RFC,Zi:2018:DIEDO,Hoffmann:2018:Discrete} and smooth settings~\cite{kamberov, icmpaper, klassiker, KusSch:1996:SRSS,Taimanov:2008:S3DLG, Konopelchenko:2000:WR,heller-lawson}. The present paper is novel as it focuses on the spinorial construction of  conformal and isometric immersions of surfaces in $\R^3$ from a purely intrinsic point of view.

\section{Spin bundles and regular homotopy classes}\label{sec:spinbundles} 
Given a (not necessarily oriented or compact) $2$-dimensional manifold $M$, we will discuss how to relate a regular homotopy class of immersions $f\colon M \to \R^3$ and an isomorphism class of  spin bundles $L$ over $M$. The material is somewhat folklore \cite{Smale:1959:CIT,Hirsch:1959:IOM,Pinkall:1985:RHC,icmpaper,klassiker}, even though  there seems to be no single source one could reference. Recall that two smooth immersions $f,\tilde{f}\colon M\to \R^3$  are {\em regularly homotopic} if and only if there is a smooth homotopy via immersions $f_{t}\colon M\to \R^3$ with $f_0=f$ and $f_1=\tilde{f}$. It is well known~\cite{Smale:1959:CIT,Hirsch:1959:IOM} that two immersions $f$ and $\tilde{f}$ are regularly homotopic if and only if their derivatives $df$ and $d\tilde{f}$ are smoothly homotopic as sections in $\Hom(TM,\R^3)$. 
\begin{Definition}\label{def:spinbundle}
 A {\em spin bundle} over $M$  is a right quaternionic line bundle $L\to M$ together with a  non-degenerate quaternionic skew-Hermitian pairing
\begin{equation}\label{eq:pairing}
(\cdot,\cdot)\colon L\times L\to \Hom(TM,\H)\,, 
\end{equation}
which we refer to as a {\em spin pairing}. 

Two spin bundles $L,\tilde{L}\to M$ are isomorphic if there is a bundle isomorphism $T\colon L\to\tilde{L}$ intertwining their respective spin pairings.
\end{Definition}
Later in the paper we use the extension of the spin pairing to  the $2$-form valued pairing
\[
(\cdot,\cdot)\colon \Hom(TM,L) \times L\to \Lambda^2  TM^{*}\otimes\H\,,\qquad (\mu,\psi)_{X,Y}:=(\mu_X,\psi)_Y-(\mu_Y,\psi)_X
\]
obtained by inserting an $L$-valued $1$-form $\mu$ on the left, where $X,Y\in TM$. Requiring the skew-Hermitian property 
\[
\overline{(\mu,\psi)}=-(\psi,\mu)
\] 
to pertain in this scenario, necessitates the analogous definition 
\[
(\cdot,\cdot)\colon  L\times \Hom(TM,L)\to \Lambda^2  TM^{*}\otimes\H\,,\qquad (\psi,\mu)_{X,Y}=(\psi,\mu_X)_Y-(\psi,\mu_Y)_X
\]
when inserting the $L$-valued $1$-form $\mu$ on the right.

Note that by transversality a quaternionic line bundle $L\to M$ always has a nowhere vanishing smooth section $\psi\in\Gamma(L)$.  We denote the (right) $\H^{\times}$ principal bundle obtained by removing the zero-section of $L$ by $L^{\times}$, then $\Gamma(L^{\times})$ is the space of nowhere vanishing sections. The following are immediate consequences from the definition of a spin bundle:
\begin{enumerate}
\item
$\overline{(\psi,\psi)}=-(\psi,\psi)$, so that $(\psi,\psi)\in \Hom(TM,\R^3)$ is an $\R^3$-valued $1$-form where we identify $\R^3=\Im \H$.
\vspace{1em}
\item
Any two sections $\psi,\varphi\in\Gamma(L^{\times})$ scale by a nowhere vanishing function $\lambda\in C^{\infty}(M,\H)$ and hence
\[
 (\varphi, \varphi)=(\psi\lambda,\psi\lambda)=\bar{\lambda}(\psi,\psi)\lambda
 \]
 are pointwise related by a stretch rotation in $\R^3$.
 Therefore, the Riemannian metrics $|(\varphi, \varphi)|^2 =|\lambda|^2|(\psi,\psi)|^2$ are conformally equivalent and $M$ inherits a conformal structure, rendering $(\psi,\varphi)\in\Omega^1(M,\H)$ conformal.  

Whence we observe that an oriented $M$ becomes a Riemann surface in which case we denote its complex structure by $*\colon TM^*\to TM^*$, the negative of the Hodge-star operator on $1$-forms. Since $(\psi,\varphi)\in \Hom(TM,\H)$  now are conformal $1$-forms there is, at each point of $M$, a unique (quaternionic linear) complex structure $J\in\End(L)$ such that 
  \begin{equation}\label{eq:J-relate}
 *(\psi,\varphi)=(J\psi,\varphi)=(\psi, J\varphi)
 \end{equation}
 for all $\psi,\varphi\in L$. In particular, $L$ becomes a rank $2$ complex vector bundle.
 \end{enumerate}
Note that if we had started from a Riemann surface in our Definition~\ref{def:spinbundle} of a spin bundle, the existence of the complex structure $J\in\Gamma(\End(L))$ and this last compatibility relation~\eqref{eq:J-relate} would become part of the axioms.
\begin{Example}\label{ex:inducedspin}[The induced spin bundle]
Let $f\colon M\to\R^3$ be a (not necessarily conformal) immersion of a conformal surface. 
We can uniquely decompose the derivative
\begin{equation}\label{eq:polar}
df=\omega\circ B
\end{equation}
into a nowhere vanishing conformal $1$-form $\omega\in\Gamma(\Conf(TM,\R^3))$ and a positive, self-adjoint (with respect to any conformal 
metric) bundle isomorphism $B\in\Gamma(TM)$ with $\det B=1$. Note that $B=\Id$ if and only if $f$ is conformal. 
We define the induced spin bundle to be the trivial quaternionic line bundle 
\[
L_f:=M\times \H
\]
together with the  spin pairing 
\[
(\psi,\varphi)= \bar{\psi}\,\omega\,\varphi\,.
\]
Note that, by construction, $\omega=(1,1)$ with $1\in\Gamma(L_f)$  the constant section. 

 \begin{figure}
	\center
	\includegraphics[width=.85\textwidth]{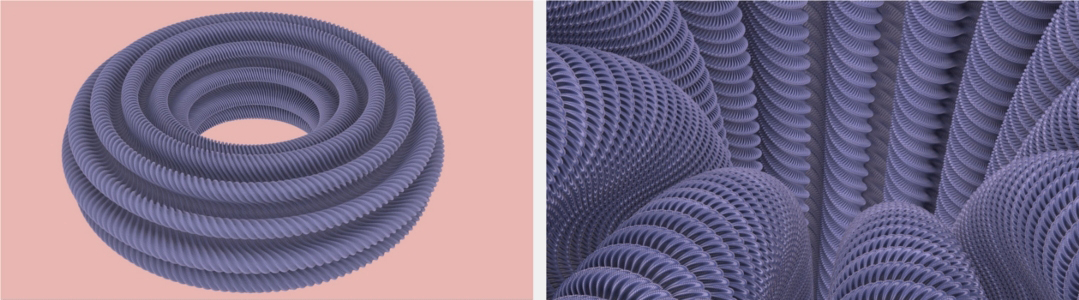}
	\caption{\label{fig:C1-flattorus}A square flat torus can be isometrically \(C^1\)-embedded into Euclidean 3-space \cite{Borrelli:2013:IES}. Pictures by the H{\'e}v{\'e}a project.}
\end{figure}

In case $M$ is oriented, and thus a Riemann surface, the immersion $f\colon M\to\R^3$ has an oriented normal $N\colon M\to S^2$, which, viewed as an imaginary quaternion, satisfies $N^2=-1$.  Then the conformal $1$-form $\omega\in\Gamma(\Conf(TM,\R^3))$  satisfies $*\omega=N\omega$ 
and $J\varphi:=-N\varphi$, for $\varphi\in L_f$,  is the unique complex structure on $L_f$ which satisfies the  compatibility properties \eqref{eq:J-relate}. 
\end{Example}
\begin{Theorem}\label{thm:spin-homotop}
The assignment $f\mapsto L_f$ is a bijection between regular homotopy classes of immersions $f\colon M\to\R^3$  and isomorphism classes of spin bundles $L\to M$. 
\end{Theorem}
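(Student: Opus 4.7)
The plan is to factor the claimed correspondence through the intermediate space $\Gamma(\Conf(TM,\R^3))$ of nowhere vanishing conformal $1$-forms and use the double cover $\H^\times\to\R_+\mathbf{SO}(3)$ to match regular homotopy classes with isomorphism classes of spin bundles. First I would invoke Smale--Hirsch: two immersions are regularly homotopic iff their derivatives are homotopic as nowhere vanishing sections of $\Hom(TM,\R^3)$, and every such section is homotopic to some $df$. Combined with the polar decomposition $df=\omega\circ B$ and the contractibility of the space of positive self-adjoint bundle maps with $\det B=1$, this identifies regular homotopy classes of immersions with homotopy classes of nowhere vanishing $\omega\in\Gamma(\Conf(TM,\R^3))$. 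So it suffices to show that the assignment $\omega\mapsto L_\omega$, where $L_\omega=M\times\H$ is equipped with the pairing $(a,b):=\bar a\,\omega\,b$, descends to a bijection between these homotopy classes and isomorphism classes of spin bundles.

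The next step is to rephrase both equivalences in terms of maps $M\to\H^\times$. Any bundle isomorphism $L_\omega\to L_{\tilde\omega}$ is left multiplication by a unique $\lambda\colon M\to\H^\times$, and the intertwining condition on the spin pairings reads $\bar\lambda\,\omega\,\lambda=\tilde\omega$. Writing also $\tilde\omega=h\cdot\omega$ for the unique stretch rotation $h\colon M\to\R_+\mathbf{SO}(3)$, such an isomorphism exists precisely when $h$ lifts through the double cover $p\colon\H^\times\to\R_+\mathbf{SO}(3)$, $p(\lambda)v=\bar\lambda v\lambda$. On the other hand, $\omega$ and $\tilde\omega$ are homotopic as sections iff the transition $h$ is null-homotopic as a map $M\to\R_+\mathbf{SO}(3)$. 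For well-definedness (homotopic $\omega$'s yield isomorphic spin bundles), given a homotopy $\omega_t=h_t\omega$ with $h_0=1$, the homotopy lifting property for the covering $p$ produces $\lambda_t\colon M\to\H^\times$ with $\lambda_0=1$, and $\lambda_1$ is the required bundle isomorphism. Conversely, since $\H^\times\simeq S^3$ is $2$-connected and $\dim M=2$, obstruction theory forces every $\lambda\colon M\to\H^\times$ to be null-homotopic; a null-homotopy $\lambda_t$ from $1$ to $\lambda$ then yields the homotopy $\omega_t:=\bar\lambda_t\,\omega\,\lambda_t$ from $\omega$ to $\tilde\omega$ in $\Gamma(\Conf(TM,\R^3))$.

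Finally, surjectivity onto isomorphism classes of spin bundles: given $L\to M$, transversality supplies a nowhere vanishing section $\psi\in\Gamma(L)$, and $\omega:=(\psi,\psi)$ is then a nowhere vanishing section of $\Conf(TM,\R^3)$. The trivialization $M\times\H\to L$, $a\mapsto\psi\cdot a$, pulls the pairing on $L$ back to $\bar a\,\omega\,b$, exhibiting $L\cong L_\omega$; the $h$-principle part of Smale--Hirsch supplies an immersion $f\colon M\to\R^3$ with $df$ homotopic to $\omega$, so by the previous paragraph $L_f\cong L_\omega\cong L$. The step I expect to be the main obstacle is the converse direction, namely seeing that an algebraic spin bundle isomorphism is invisible to regular homotopy precisely because $\H^\times$ is $2$-connected: the $\Z/2$ obstruction to lifting through $\SU(2)\subset\H^\times$ and the obstruction to null-homotopy of a map $M\to\mathbf{SO}(3)$ must be identified as the same class in $H^1(M,\Z/2)$. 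This coincidence is also what produces the $2^{2p}$ regular homotopy classes matching the $2^{2p}$ isomorphism classes of spin bundles.
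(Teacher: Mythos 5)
Your argument mirrors the paper's own proof: Smale--Hirsch together with the polar decomposition $df=\omega\circ B$ reduces regular homotopy to homotopy classes of $\omega\in\Gamma(\Conf(TM,\R^3))$, well-definedness follows by lifting the homotopy $\omega_t=h_t\omega$ through the double cover $\H^\times\to\R_+\mathbf{SO}(3)$, injectivity from the $2$-connectivity of $\H^\times\simeq S^3$ (phrased in the paper as all sections of $\Gamma(L^\times)$ being homotopic), and surjectivity from the $h$-principle applied to $\omega=(\psi,\psi)$. The only divergence is your closing worry about matching $\Z/2$ obstruction classes, which is not needed for the bijection---your null-homotopy of $\lambda\colon M\to\H^\times$ already closes that step---and only surfaces in the separate corollary counting the $2^{2p}$ classes.
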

\begin{proof}
Let $f_t\colon M\to \R^3$ be a regular homotopy between two (not necessarily conformal) immersions $f_0=f$ and $f_1=\tilde{f}$.  Then their derivatives $df, d\tilde{f}$ are homotopic by~\cite{Smale:1959:CIT,Hirsch:1959:IOM}, and therefore, by uniqueness of \eqref{eq:polar}, we have a homotopy $ \omega_t$ in $\Gamma(\Conf(TM,\R^3))$ connecting $\omega_0=\omega$ and $\omega_1=\tilde{\omega}$. Since $\Conf(TM,\R^3)$ is an $\R_{+}{\bf SO}(3)$ principal bundle, there exists a path $h\in C^{\infty}([0,1],\R_{+}{\bf SO}(3))$, starting at the identity $h_0=1$, with $\omega_t=h_t\,\omega$. Whence we conclude that there is a lift $\lambda\colon [0,1]\to \H^{\times}$ of $h$ with $\omega_t=\bar{\lambda}_t\omega \lambda_t$, and in particular we have
\[
\tilde{\omega}=\bar{\lambda}_1\omega \lambda_1\,.
\]
This last implies that the map $\varphi\mapsto T(\varphi):=\lambda_1\varphi$ is a isomorphism between the induced spin bundles $L_f$ and $L_{\tilde{f}}$.

In order to show the converse, let $L\to M$ be a spin bundle and choose a nowhere vanishing section $\psi\in\Gamma(L^{\times})$. Then 
$\omega=(\psi,\psi)\in\Gamma(\Conf(TM,\R^3))$ is a maximal rank $2$  conformal bundle map. By Smale's theorem \cite{Smale:1959:CIT}, there exists an immersion $f\colon M\to\R^3$ with $df$ homotopic to $\omega$ in $\Gamma(\Hom(TM,\R^3))$. From what was said before, we can conclude that $L\cong L_f$. Since all sections of $\Gamma(L^{\times})$ are homotopic, the regular homotopy class of the resulting immersion is independent of the nowhere vanishing section chosen. In particular, immersions constructed from isomorphic spin bundles are regularly homotopic. 
\end{proof}
So far, we were mainly concerned with the differential topological properties of spin bundles. Understanding how to construct conformal and isometric immersions  from spin bundles, we additionally need to  investigate their holomorphic aspects. Let $L\to M$ be a spin bundle over a Riemann surface $M$, in which case the spin pairing \eqref{eq:pairing} is compatible by \eqref{eq:J-relate} with the complex structures on $M$ and $L$. To fix notations and for future use, we list a number of properties of spin bundles over a Riemann surface that follow immediately from their definition. 
\begin{enumerate}
\item 
The complex line subbundles 
\[
E_{\pm}=\{\varphi\in L\,;\, J\varphi=\pm\varphi  i\}\subset L
\]
 are isomorphic via quaternionic multiplication by $j$ on the right, and thus as a complex rank $2$ bundle $L\cong E\oplus E$ is isomorphic to the double of the complex line bundle $E= E_{+}\cong E_{-}$. This isomorphism is also quaternionic linear provided $E\oplus E$ has the right quaternionic structure given by the Pauli matrices. 
\item
The spin pairing \eqref{eq:pairing} restricts to a non-degenerate complex pairing $E\times E\to K$ with values in the canonical bundle $K$ of $M$, exhibiting $E\to M$  as a complex spin bundle, that is $E^2\cong K$.  The holomorphic structure $\dbar^K$ of the canonical bundle is given by the exterior derivative $d$ on $\Gamma(K)=\Omega^{1,0}(M,\C)$.  The isomorphism $E^2\cong K$ induces a unique holomorphic structure $\dbar^{E}$ on $E$  such that $\dbar^K=\dbar^E\otimes\dbar^E$ or, equivalently, 
\[
d (\psi,\varphi)=(\dbar^{E}\psi,\varphi)+(\psi, \dbar^{E} \varphi)
\]
for $\psi,\varphi\in\Gamma(E)$.  In particular, if  $M$ is compact,  $\deg E= p-1$ is half of the degree of the canonical bundle, where $p=\text{genus}\,M$. Furthermore, $E$  with $\dbar^E$ is a holomorphic spin bundle and since there are $2^{2p}$ many holomorphic square roots of the canonical bundle $K$---the half lattice points in the Picard torus of isomorphism classes of degree $p-1$ holomorphic line bundles---there are $2^{2p}$ many isomorphism classes of holomorphic  spin bundles $E\to M$ over a compact  Riemann surface.
\end{enumerate}
\begin{Lemma}\label{lem:Diracstructure}
Let $L\to M$ be a spin bundle. Then there exists a unique operator 
\[
\dbar\colon \Gamma(L)\to\Gamma(\bar{K}L)
\]
called the {\em Dirac structure}, with the following properties. 
\begin{enumerate}
\item
$\dbar$ is complex linear, that is $[J,\dbar]=0$.
\item
$\dbar$ satisfies the product rule 
\[
\dbar(\psi\lambda)=(\dbar \psi)\lambda+ (\psi \,d\lambda)^{0,1}
\]
over quaternion valued functions $\lambda\in C^{\infty}(M,\H)$, where $(\cdot)^{0,1}$ denotes the usual type decomposition of complex vector bundle valued $1$-forms.  In particular, $\dbar$ is a (right) quaternionic and (left) complex linear first order elliptic operator.
\item
$\dbar$ is compatible with the spin pairing
\begin{equation}\label{eq:d-comp}
d(\psi,\varphi)=(\dbar\psi,\varphi)+(\psi, \dbar\varphi)\,.
\end{equation}
\end{enumerate}
\end{Lemma}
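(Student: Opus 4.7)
The plan is to bootstrap off the holomorphic structure $\dbar^E$ on $E := E_+$ constructed in the discussion preceding the lemma. Recall that $L$ splits as $E_+\oplus E_-$ with $E_\pm=\{\varphi\in L\,;\, J\varphi=\pm\varphi i\}$ complex line subbundles, identified by right multiplication by $j$. The holomorphic structure $\dbar^E$ is the unique one making $E^2\cong K$ an isomorphism of holomorphic line bundles.

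For \emph{uniqueness}, I would argue that any $\dbar$ satisfying (i)--(iii) must restrict on $E_+$ to a holomorphic structure: property (ii) for $\lambda\in C^\infty(M,\C)$ reduces to the standard Leibniz rule, and property (i) ensures the image lands in $\bar{K}E_+$. Property (iii) applied to $\psi,\varphi\in\Gamma(E_+)$, combined with the restriction of the spin pairing to the $K$-valued pairing on $E\times E$ (bullet (2) above), forces this restriction to satisfy $d(\psi,\varphi)=(\dbar\psi,\varphi)+(\psi,\dbar\varphi)$, which pins down $\dbar|_{E_+}=\dbar^E$ by the uniqueness of the spin-square-root holomorphic structure on $E$. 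Property (ii) applied with the constant quaternion $\lambda=j$ then determines $\dbar$ on $E_-=E_+\cdot j$, hence everywhere on $L$.

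For \emph{existence}, I would \emph{define} $\dbar$ on $L=E_+\oplus E_-$ by
\[
\dbar(\psi+\varphi j):=\dbar^E\psi+(\dbar^E\varphi)\,j\qquad \text{for }\psi,\varphi\in\Gamma(E_+).
\]
Then I would verify (i) using that $J$ acts as $R_i$ on $E_+$ and $-R_i$ on $E_-$, both commuting with the $\C$-linear $\dbar^E$; (ii) by writing an arbitrary $\lambda\colon M\to\H$ as $\lambda=u+vj$ with $u,v\colon M\to\C$, expanding using the commutation $ju=\bar u\,j$, and applying the Leibniz rule for $\dbar^E$ over complex-valued functions; and (iii) by $\R$-bilinearity in $\psi,\varphi$, reducing to pure $E_+$ pairs (built into the definition of $\dbar^E$), pure $E_-$ pairs (by $j$-conjugation symmetry), and mixed pairs, where the identity $(\psi,\chi j)=(\psi,\chi)\,j$ reduces the claim to $d((\psi,\chi)\,j)=(d(\psi,\chi))\,j$ together with the already-known compatibility of $\dbar^E$ with the complex pairing on $E$.

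The main obstacle is the bookkeeping for property (iii) in the mixed case $\psi\in\Gamma(E_+),\varphi\in\Gamma(E_-)$: one has to track how right multiplication by $j$ interchanges $E_+$ and $E_-$ and, via $ji=-ij$, conjugates complex scalars---hence swaps the $(1,0)$ and $(0,1)$ type components of an $\H$-valued $1$-form---and then check that the extended $2$-form-valued pairings $(\mu,\psi)$ and $(\psi,\mu)$ combine correctly under the skew-Hermitian convention $\overline{(\mu,\psi)}=-(\psi,\mu)$ so that both sides of (iii) live in the same component of $\Lambda^2 TM^*\otimes\H$. Beyond this purely algebraic bookkeeping, everything is formal manipulation using the compatibility of $\dbar^E$ with the spin pairing on $E$ established in bullet (2).
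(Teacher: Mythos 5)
Your proposal is correct and follows essentially the same route as the paper: both define $\dbar$ as $\dbar^E\oplus\dbar^E$ under the isomorphism $L\cong E_+\oplus E_-$, and both derive uniqueness from the fact that properties (i)--(ii) leave only a $\bar K$-twist of freedom which (iii) then kills. The paper phrases uniqueness as an affine argument (any competitor is $\dbar+\alpha$ with $\alpha\in\Gamma(\bar K)$, and \eqref{eq:d-comp} forces $\alpha=0$), whereas you pin down $\dbar|_{E_+}=\dbar^E$ directly via (iii) and the uniqueness of the holomorphic square-root structure, then propagate by $j$-linearity from (ii); these are the same idea packaged slightly differently, and your more explicit verification of the mixed $E_+/E_-$ case in (iii) is a reasonable expansion of the paper's ``can be shown''.
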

\begin{proof}
Since $L\cong E\oplus E$ is the double of a complex holomorphic spin bundle $E$, the operator $\dbar:=\dbar^{E}\oplus\dbar^{E}$ can be shown to fulfill the requirements of the lemma. Any other operator satisfying the properties of the lemma has to be of the form $\dbar+\alpha$ with $\alpha\in\Gamma(\bar{K})$ a $1$-form of type $(0,1)$. But then \eqref{eq:d-comp} implies that $\alpha=0$.
\end{proof}
\begin{Corollary}
Let $M$ be a compact oriented surface of genus $p$. Then there are $2^{2p}$ many isomorphism classes of spin bundles $L\to M$ and therefore, by  Theorem~\ref{thm:spin-homotop}, also  $2^{2p}$ many regular homotopy classes of immersions $f\colon M\to\R^3$.
\end{Corollary}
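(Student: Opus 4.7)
The plan is to leverage the decomposition sketched in items (i)--(ii) preceding Lemma~\ref{lem:Diracstructure}: every spin bundle $L\to M$ is canonically the quaternionic double of a holomorphic spin bundle $E\to M$, meaning a holomorphic line bundle with $E^2\cong K$. Once this correspondence is shown to descend to a bijection on isomorphism classes, the count reduces to the classical enumeration of holomorphic square roots of the canonical bundle.

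First I would set up the forward map. Given a spin bundle $L$, set $E:=E_+=\{\varphi\in L : J\varphi=\varphi i\}$. The compatibility relation \eqref{eq:J-relate} ensures that $(\cdot,\cdot)$ restricts to a nondegenerate $\C$-bilinear pairing $E\otimes_\C E\to K$, providing an isomorphism $E^2\cong K$, and by the paragraph preceding Lemma~\ref{lem:Diracstructure}, $E$ inherits a unique holomorphic structure $\dbar^E$ such that $\dbar^E\otimes\dbar^E$ corresponds to $d$ on $K$. Any isomorphism $T\colon L\to\tilde L$ of spin bundles intertwines the canonical complex structures $J,\tilde J$ (since $J$ is intrinsically determined by the spin pairing via \eqref{eq:J-relate}) and preserves the pairing; hence $T|_E$ is an isomorphism $E\to\tilde E$ of holomorphic spin bundles.

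Next I would construct the inverse. Given a holomorphic spin bundle $E$ with a fixed isomorphism $E^2\cong K$, form $L:=E\oplus E$ and equip it with the standard right quaternionic structure coming from the Pauli matrices, so that right multiplication by $j$ interchanges the two summands. The isomorphism $E^2\cong K\hookrightarrow\Hom(TM,\H)$ together with skew-Hermiticity dictates a unique spin pairing on $L$ whose $E_+$-restriction recovers the original pairing on $E$. A routine verification shows that these two constructions are mutually inverse on isomorphism classes: starting from $L$, extracting $E_+$, and doubling returns a bundle isomorphic to $L$; conversely, isomorphisms of holomorphic spin bundles extend to isomorphisms of the doubles preserving all structure.

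Finally I would appeal to the classical fact recalled in item~(ii): the set of isomorphism classes of holomorphic square roots of $K$ is a torsor under the $2$-torsion subgroup of $\mathrm{Pic}^0(M)$, which for a compact Riemann surface of genus $p$ is $(\Z/2)^{2p}$. Hence there are exactly $2^{2p}$ isomorphism classes of holomorphic spin bundles, and by the bijection above the same count holds for spin bundles $L\to M$. Theorem~\ref{thm:spin-homotop} then converts this into the enumeration of regular homotopy classes of immersions $f\colon M\to\R^3$. The only genuinely non-formal point is the bookkeeping in the middle step---checking that the Pauli-matrix doubling is well-defined on isomorphism classes and is a two-sided inverse to taking $J$-eigenspaces---but this is essentially a reorganization of the material already laid out in the text.
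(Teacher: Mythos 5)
Your argument is correct and follows the same route as the paper: pass from $L$ to the holomorphic spin line bundle $E=E_+$, use that this correspondence (with inverse $E\mapsto E\oplus E$) respects isomorphism classes, count the $2^{2p}$ holomorphic square roots of $K$, and invoke Theorem~\ref{thm:spin-homotop}. The paper's own proof is a compressed version of exactly this; you have merely filled in the intermediate bookkeeping the paper leaves implicit.
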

\begin{proof}
We know that a spin bundle $L\to M$ induces a unique complex structure on $M$ and $L$.  Since $L\cong E\oplus E$ for a holomorphic spin bundle $E\to M$, we conclude that there are $2^{2p}$  many isomorphism classes of spin bundles $L\to M$.
\end{proof}

At this point it is helpful to briefly review the notion of a quaternionic holomorphic structure \cite{icmpaper} on a quaternionic line bundle  $L\to M$  over  a Riemann surface. Such a structure is given by an operator
\[
D\colon \Gamma(L)\to\Gamma(\bar{K}L)
\]
satisfying the product rule 
\[
D(\psi\lambda)=(D \psi)\lambda+ (\psi \,d\lambda)^{0,1}
\]
over quaternion valued functions $\lambda\in C^{\infty}(M,\H)$. Note that choosing $\lambda\in\H$ constant, the product rule implies that $D$ is quaternionic linear. 

If $L\to M$ is a spin bundle, then we can demand the quaternionic holomorphic structure to be compatible with the spin pairing.
\begin{Definition}\label{def:holospin}
Let $L\to M$ be a spin bundle over a Riemann surface. A quaternionic holomorphic structure $D\colon \Gamma(L)\to\Gamma(\bar{K}L)$ is called a {\em quaternionic holomorphic spin structure}, if $D$ is compatible with the spin pairing 
\begin{equation}\label{eq:D-compatible}
d(\psi,\varphi)=(D\psi,\varphi)+(\psi, D\varphi)
\end{equation}
where  $\psi,\varphi\in\Gamma(L)$.
\end{Definition}
Note that by Lemma~\ref{lem:Diracstructure}, the Dirac structure $\dbar$ on a spin bundle $L\to M$ is a quaternionic holomorphic spin structure, in fact the unique one commuting with the complex structure $J$ on $L$. 

The general quaternionic holomorphic spin structure $D$ will not commute with $J$, and therefore will have a decomposition 
\begin{equation}\label{eq:D-decompose}
D=D_{+}+D_{-}
\end{equation}
into $J$ commuting and $J$ anti-commuting parts. The component $D_{+}=\dbar+\alpha$, a complex holomorphic structure on $L$, differs from the Dirac structure $\dbar$ by  a $(0,1)$-form $\alpha\in\Gamma(\bar{K})$, where we identify $\End_{+}(L)\cong \underline{\C}$. The component $D_{-}\in\Gamma(\bar{K}\End_{-}(L))$  is a $(0,1)$-form with values in the complex antilinear endomorphisms $\End_{-}(L)$. 

In order to characterize quaternionic holomorphic spin structures, it is helpful to  identify  $\bar{K}\End_{-}(L)$ with {\em half-densities}. Recall that the bundle of half-densities is the real, oriented  line bundle $|K|\to M$, whose fiber over $x\in M$ is given by $|K|_x=\mathbb{R}\sqrt{g_x}$, where $g$ is a Riemannian metric in the conformal class of $M$. The half-density valued quadratic form 
\begin{equation}\label{eq:quadraticform}
|\cdot|^2:L\to |K|\,,\qquad |\psi|^2:=|(\psi,\psi)|
\end{equation}
on the spin bundle $L$ can be polarized to the non-degenerate, symmetric inner product 
\begin{equation}\label{eq:anglebrac}
\langle \cdot, \cdot \rangle\colon L\times L\to |K|\,.
\end{equation}
We frequently will identify $|K|^2\cong \Lambda^2 TM^*$ by assigning a metric $g$ its volume $2$-form $\vol_g$. Since $|\psi|^4\in\Gamma(|K|^2)$ for $\psi\in\Gamma(L)$, a spin bundle  carries the conformally invariant $L^4$-metric $\int_M |\psi|^4$ on $\Gamma(L)$. 
\begin{Lemma}\label{lem:eta}
Let $L\to M$ be a spin bundle over a Riemann surface. Then the complex line bundle $\bar{K}\End_{-}(L)$ is isomorphic to the complexified half-density bundle 
\[
|K|\otimes\C \cong \bar{K}\End_{-}(L)\colon U+VJ\mapsto (U+VJ)\eta
\]
where $\eta\in\Gamma(\bar{K}\End_{-}(L)|K|^{-1})$ is the nowhere vanishing section 
\[
\eta:=J\psi\frac{(\psi,\cdot)}{|\psi|^2}\,.
\]
Notice that $\eta$ is  well-defined independent of the choice of the nowhere vanishing section $\psi\in\Gamma(L^{\times})$.
\end{Lemma}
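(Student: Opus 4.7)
The plan breaks into three steps: (a) well-definedness of $\eta$ independent of the choice of $\psi \in \Gamma(L^{\times})$; (b) verification that $\eta$ lies in $\Gamma(\bar{K}\End_{-}(L)|K|^{-1})$; and (c) showing that multiplication by $\eta$ gives the claimed isomorphism.

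For step (a), I substitute $\psi \mapsto \psi\lambda$ for any $\lambda \colon M \to \H^{\times}$. Quaternionic linearity of $J$ gives $J(\psi\lambda) = (J\psi)\lambda$; sesquilinearity of the spin pairing gives $(\psi\lambda,\cdot) = \bar{\lambda}(\psi,\cdot)$; and the conformal scaling gives $|\psi\lambda|^2 = |\lambda|^2 |\psi|^2$. The two quaternionic scalars combine as $\lambda\bar{\lambda} = |\lambda|^2$, a positive real scalar, which cancels the factor in the denominator, leaving $\eta$ unchanged.

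For step (b), the central computation is the type condition $*\eta = -J\eta$, where $*$ acts on the $1$-form part and $J$ on the $L$-factor; this places $\eta$ in the $(0,1)$-part. Applying the compatibility $*(\psi,\varphi) = (J\psi,\varphi)$, I get $(*\eta)(\varphi) = J\psi \cdot (J\psi,\varphi)/|\psi|^2$, and the crux is the pointwise identity
\[
J\psi \cdot (J\psi,\varphi) = \psi \cdot (\psi,\varphi)
\]
in $T^{*}M \otimes L$, which I verify in a local quaternionic trivialization $L = \psi_0 \cdot \H$: writing $\psi = \psi_0 a$, $\varphi = \psi_0 b$, and $J\psi_0 = \psi_0 N$ for a unit imaginary quaternion function $N$ realizing the complex structure on $L$, both sides reduce to $\psi_0 |a|^2 (\psi_0,\psi_0) b$ using $N^2 = -1$ and $a\bar{a} = |a|^2 \in \R$. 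Combined with $J^2 = -\Id$ this gives $*\eta = -J\eta$. The anti-commutation $\eta \circ J = -J \circ \eta$, which places $\eta$ in $\End_{-}(L)$, then follows immediately from the second compatibility $(\psi, J\varphi) = *(\psi,\varphi)$, since $\eta(J\varphi) = J\psi \cdot *(\psi,\varphi)/|\psi|^2 = (*\eta)(\varphi) = -J\eta(\varphi)$.

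For step (c), nowhere-vanishing of $\eta$ follows from the non-degeneracy of the spin pairing: at any point and any nonzero tangent vector $v$, there exists $\varphi$ with $(\psi,\varphi)(v) \neq 0$, so $\eta$ is nonzero as a section of $\bar{K}\End_{-}(L)|K|^{-1}$. Since $|K|\otimes\C$ and $\bar{K}\End_{-}(L)$ are both complex line bundles and $\eta$ is a nowhere-vanishing $\C$-linear intertwiner---the $J$ in $U+VJ$ matches post-composition with $J$ on $\End_{-}(L)$---the map $U+VJ \mapsto (U+VJ)\eta$ is a bundle isomorphism. I expect the main obstacle to be a conceptual derivation of the identity $J\psi\cdot(J\psi,\varphi) = \psi\cdot(\psi,\varphi)$; the local trivialization suffices but obscures why this is the precise relation needed to produce a $J$-antilinear endomorphism.
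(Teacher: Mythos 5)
Your proof is correct and follows essentially the same route as the paper's: well-definedness is the same $\psi \mapsto \psi\lambda$ substitution, and the type condition $*\eta = -J\eta$ is the same computation using $J\psi = \psi N$ with $N^2 = -1$ together with the compatibility $*(\psi,\cdot) = (J\psi,\cdot)$. The auxiliary trivialization $\psi_0$ is unnecessary---the paper simply works with $\psi$ itself (your $a=1$), so your ``crux identity'' $J\psi(J\psi,\cdot) = \psi(\psi,\cdot)$ is precisely the paper's displayed chain of equalities, and the paper leaves your step (c) unremarked as routine.
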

\begin{proof}
If $\tilde{\psi}=\psi\lambda$ is another nowhere vanishing section of $L$, then 
\[
\tilde{\psi}\frac{(\tilde{\psi},\cdot)}{|\tilde{\psi}|^2}=\psi\lambda\frac{\bar{\lambda}(\psi,\cdot)}{|\lambda|^2|{\psi}|^2}=\psi\frac{(\psi,\cdot)}{|\psi|^2}
\]
which shows that $\eta$ is well-defined. It remains to verify that $*\eta = -J\eta$, that is,  $\eta\in\Gamma(\bar{K}\End_{-}(L)|K|^{-1})$. 
Let $\psi\in\Gamma(L^{\times})$ be a nowhere vanishing section so that  $J\psi=\psi N$ with $N^2=-1$. Then, using the compatibility relation~\eqref{eq:J-relate}, we obtain
\[
*\eta=J\psi\frac{*(\psi,\cdot)}{|\psi|^2}=J\psi\frac{(J\psi,\cdot)}{|\psi|^2}=J\psi\frac{(\psi N,\cdot)}{|\psi|^2}=J\psi(-N)\frac{(\psi,\cdot)}{|\psi|^2}=-J\eta, 
\]
which finishes the proof of the lemma.
\end{proof}
With these preparations, we can now give a characterization of  quaternionic holomorphic spin structures, which also can be found in~\cite{icmpaper}, albeit from a slightly different perspective.
\begin{Lemma}\label{lem:spin-decompose}
Every quaternionic holomorphic spin structure $D$ on a spin bundle $L\to M$ over a Riemann surface  is of the form 
\[
D=\dbar+U\eta
\]
with $\dbar$ the Dirac structure and  $U\in\Gamma(|K|)$ a real half-density, the {\em Dirac potential}. 
\end{Lemma}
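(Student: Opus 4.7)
Since both $D$ and $\dbar$ satisfy the same quaternionic Leibniz rule (by Definition~\ref{def:holospin} and Lemma~\ref{lem:Diracstructure}), their difference $A := D - \dbar$ is tensorial, a section of $\bar{K}\otimes\End_{\H}(L)$. The plan is to first decompose $A = A_+ + A_-$ into $J$-commuting and $J$-anticommuting parts as in~\eqref{eq:D-decompose}. Under the identification $\End_+(L) \cong \underline{\C}$ (sending $J\mapsto i$), the piece $A_+$ corresponds to a complex $(0,1)$-form $\alpha = \alpha_1 + i\alpha_2 \in \Gamma(\bar{K})$ acting by $\alpha\psi = \alpha_1\psi + \alpha_2 J\psi$, where $\alpha_j$ are real $1$-forms with $\alpha_2 = *\alpha_1$; by Lemma~\ref{lem:eta}, $A_- = (U+VJ)\eta$ for unique real half-densities $U, V \in \Gamma(|K|)$. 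Thus the lemma amounts to showing $\alpha = 0$ and $V = 0$.

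Subtracting the spin-pairing compatibility for $\dbar$ from that for $D$ yields $(A\psi,\varphi) + (\psi,A\varphi) = 0$ for all sections $\psi, \varphi$. Since $L$ has quaternionic rank one, I can write $\varphi = \psi\lambda$ locally; combining tensoriality of $A$ with quaternionic sesquilinearity of the pairing reduces the condition to
\[
(A\psi,\psi) + (\psi,A\psi) = 0
\]
for any nowhere-vanishing $\psi \in \Gamma(L^\times)$. Set $\omega := (\psi,\psi)$ and let $N\in\Im\H$ with $N^2 = -1$ be determined by $*\omega = N\omega$, so that $\omega$ takes values in $(\R N)^\perp \subset \Im\H$. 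Using $(J\psi,\psi) = (\psi,J\psi) = *\omega$ together with $\eta\psi = J\psi\,\omega/|\psi|^2$, a direct computation gives
\[
(A_+\psi,\psi) + (\psi,A_+\psi) = 2\bigl(\alpha_1\wedge\omega + \alpha_2\wedge *\omega\bigr), \qquad (A_-\psi,\psi) + (\psi,A_-\psi) = \frac{2V}{|\psi|^2}\,\omega\wedge\omega,
\]
the $U$-contributions cancelling between the two summands in the $A_-$ term.

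The decisive structural fact is that $\omega$ and $*\omega$ are valued in $(\R N)^\perp$, while in a local conformal coordinate with $\omega = p\,dx + Np\,dy$, $p\in(\R N)^\perp$, one finds $\omega\wedge\omega = 2|p|^2 N\,dx\wedge dy$, valued in $\R N$. Since $\R N$ and $(\R N)^\perp$ are complementary in $\Im\H$, the two contributions must vanish separately. The $\R N$-component forces $V = 0$ because $\omega\wedge\omega$ is a nowhere vanishing volume form. For the $(\R N)^\perp$-component, writing $\alpha = (f_1 + if_2)\,d\bar z$ collapses $\alpha_1\wedge\omega + \alpha_2\wedge *\omega$ to $2(f_1 Np - f_2 p)\,dx\wedge dy$; the $\R$-linear independence of $p$ and $Np$ in $(\R N)^\perp$ then forces $f_1 = f_2 = 0$, so $\alpha = 0$. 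The main obstacle is the careful bookkeeping of quaternionic conventions needed to obtain the clean $N$ versus $\perp N$ decomposition; once that is in place the result is a direct computation.
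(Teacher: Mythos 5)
Your proposal is correct and follows essentially the same route as the paper: write $D=\dbar+\alpha+(U+VJ)\eta$ using Lemma~\ref{lem:eta}, reduce the spin-pairing compatibility to the single condition $(A\psi,\psi)+(\psi,A\psi)=0$ by quaternionic Hermiticity and tensoriality of $A$, compute the two contributions, and observe that they land in the complementary subspaces $(\R N)^{\perp}$ and $\R N$ of $\Im\H$, forcing $\alpha=0$ and $V=0$ separately. Your explicit coordinate verification that $\alpha_1\wedge\omega+\alpha_2\wedge*\omega=2(f_1Np-f_2p)\,dx\wedge dy$ and $\omega\wedge\omega=2|p|^2N\,dx\wedge dy$ is a sound replacement for the intermediate identities the paper merely asserts.
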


\begin{figure}
	\center
	\begin{picture}(385,80)(0,0)
		\put(-10,-10){\includegraphics[height=3.3cm]{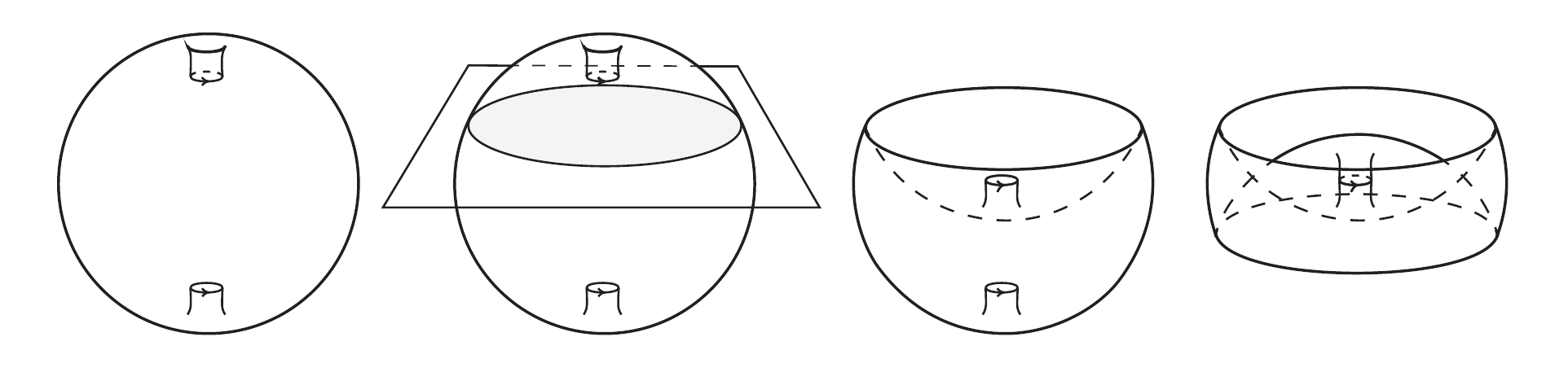}}
    		\put(75,0){(a)}
    		\put(175,0){(b)}
    		\put(275,0){(c)}
    		\put(365,0){(d)}
	\end{picture}
	\caption{\label{fig:tunneled sphere} Consider the Riemannian torus obtained by identifying the two boundary loops of the surface shown on the left (a). We believe that this torus does not admit a \(C^\infty\)-immersion into \(\mathbb{R}^3\). However, we can cut the surface by a plane (b) and reflect its upper part to obtain the surface (c). Applying this construction on the lower part of the surface results in the piecewise smooth isometric immersion of the torus (d).}
\end{figure}

\begin{proof}
From~\eqref{eq:D-decompose} and Lemma~\ref{lem:eta}, we know that 
\[
D=\dbar+\alpha+(U+JV)\eta
\]
with $\dbar$ the Dirac structure, $\alpha\in\Gamma(\bar{K})$, and $U,V\in\Gamma(|K|)$ half-densities. By Lemma~\ref{lem:Diracstructure} the Dirac structure already fulfills $d(\psi,\varphi)=(\dbar\psi,\varphi)+(\psi,\dbar\varphi)$. Thus, $D$ is a  quaternionic holomorphic spin structure if and only if the relation
\begin{equation}\label{eq:condition}
((\alpha+(U+JV)\eta)\psi,\varphi)+(\psi, (\alpha+(U+JV)\eta)\varphi)=0
\end{equation}
holds. To evaluate this last, we will use the following easy to verify identities. 
\begin{enumerate}
\item
Every $\alpha\in\Gamma(\bar{K})$ is of the form $\alpha=\beta+*\beta J$ for a unique real $1$-from $\beta\in\Omega^{1}(M,\R)$, and then
\[
(\alpha \psi,\varphi)+(\psi,\alpha\varphi)=2(\beta-*\beta)\wedge (\psi,\varphi)\,.
\]
\item
If $\psi\in\Gamma(L^{\times})$ is nowhere vanishing, then 
\[
|\psi|^2\left((\eta\psi,\varphi)+(\psi,\eta\varphi)\right)=0
\]
and
\[
|\psi|^2 \left( (J\eta\psi,\varphi)+(\psi,J\eta\varphi) \right)=2(\psi,\psi)\wedge(\psi,\varphi)\,.
\]
\end{enumerate}
Since the spin pairing is quaternionic Hermitian and $\eta$ is quaternionic linear, we may put
$\varphi=\psi\in\Gamma(L^{\times})$ in \eqref{eq:condition}.  Together with the above identities, \eqref{eq:condition} unravels to 
\begin{align*}
 0&=\left((\alpha+(U+JV)\eta)\psi,\psi\right)+(\psi, (\alpha+(U+JV)\eta)\psi)\\
 &=2(\beta-*\beta)\wedge (\psi,\psi)+ 2\frac{V}{|\psi|^2}(\psi,\psi)\wedge(\psi,\psi)\,.
 \end{align*}
Letting $J\psi=\psi N$ with $N^2=-1$, we deduce from the properties of the spin pairing~\eqref{eq:J-relate} that $(\psi,\psi)$ anti-commutes with $N$. Hence, the $\R^3$-valued $2$-forms $(\beta-*\beta)\wedge (\psi,\psi)$ and  $\frac{V}{|\psi|^2}(\psi,\psi)\wedge(\psi,\psi)$ in the last relation take values in  complementary subspaces of $\R^3$. Therefore, $D=\dbar+\alpha+(U+JV)\eta$ is a quaternionic holomorphic spin structure  if and only if $V=0$ and  $\beta-*\beta=0$, which implies $\beta=0$ and thus $\alpha=0$. 
\end{proof}
In Example~\ref{ex:inducedspin} we showed how  an immersion $f\colon M\to \R^3$ of a surface $M$ induces a spin bundle $L_f\to M$.
In case $f$ is a conformal immersion of a Riemann surface, the induced spin bundle $L_f$ additionally carries an induced quaternionic holomorphic spin structure.
\begin{Example}\label{ex:inducedD}[Induced quaternionic holomorphic structure]
Let $M$ be a Riemann surface and $f\colon M\to\R^3$ a conformal immersion. Since $B=\Id$ in the decomposition~\eqref{eq:polar}, the spin pairing of the induced spin bundle $L_f=M\times \H$ is given by
\[
(\psi,\varphi)=\bar{\psi}\,df\, \varphi
\]
for $\psi,\varphi\in\Gamma(L_f)$. In particular, $df=(1,1)$ for the constant section $1\in\Gamma(L_f^{\times})$. If $N\colon M\to S^2$ with $N^2=-1$ denotes the Gauss normal map of $f$, the conformality condition reads
\[
*df=N\,df = -df\,N \,.
\]
The  complex structure $J\in\Gamma(\End(L_f))$ on $L_f$ is given by the quaternionic linear endomorphism
\[
J\varphi:=-N\varphi
\] 
for $\varphi\in\Gamma(L_f)$ and the compatibility relations \eqref{eq:J-relate} hold.  There is a natural quaternionic holomorphic structure on $L_f$ given  by the  $(0,1)$-part of the trivial connection 
\[
D=d^{\,0,1}\colon \Gamma(L_f)\to\Gamma(\bar{K}L_f)\colon \varphi\mapsto \frac{1}{2}(d\varphi +J*d\varphi)\,.
\]
In order to verify that $D$ is in fact a quaternionic holomorphic spin structure, we need to assert the compatibility \eqref{eq:D-compatible} with the spin pairing
\begin{align*}
d(\psi,\varphi)&=d(\bar{\psi}\,df\,\varphi)=d\bar{\psi}\wedge df\, \varphi-\bar{\psi}\,df\wedge d\varphi
= \overline{d^{\,0,1}\psi}\wedge df\, \varphi-\bar{\psi}\,df\wedge d^{\,0,1}\varphi\\
&= (D\psi,\varphi)+(\psi,D\varphi)\,.
\end{align*}
Here we have used $\overline{d^{\,1,0}\psi}\wedge df= df\wedge d^{\,1,0}\varphi=0$ by type considerations.  Therefore, $D=d^{\,0,1}$
is a quaternionic holomorphic spin structure on $L_f$, and as such decomposes by Lemma~\ref{lem:spin-decompose} into 
\[
D=\dbar+U\eta
\]
with $\dbar$ the Dirac structure and $U\in\Gamma(|K|)$ the Dirac potential. One can easily compute \cite{icmpaper, klassiker, coimbra} that the Dirac potential is given by  the mean curvature half-density $U=\tfrac{1}{2}H|df|$, where $H\colon M\to \R$ is the mean curvature of $f$ calculated with respect to its induced metric $|df|^2$. 

The constant section $1\in\Gamma(L_f^{\times})$ lies in the kernel of $D=d^{\,0,1}$, which is expressed by the non-linear Dirac equation 
\[
\dbar 1+ \tfrac{1}{2}H J 1(1,1) =0\,.
\]
Here we used $|df|=|(1,1)|=|1|^2$ and the definition of $\eta$ in Lemma~\ref{lem:eta} with $\psi=1\in\Gamma(L_f^{\times})$.  The non-linear Dirac equation will be the starting point for our construction of conformal and isometric immersions in a given regular homotopy class.
\end{Example}

\section{Conformal and isometric immersions} \label{sec:variational}
Given a Riemann surface $M$, we want to construct a conformal immersion $f\colon M\to\R^3$ with small Willmore energy in a given regular  homotopy class. By Theorem~\ref{thm:spin-homotop}, a regular homotopy class is given by a choice of spin bundle $L\to M$ that  comes equipped with the Dirac structure $\dbar$ from Lemma~\ref{lem:Diracstructure}. Any nowhere vanishing section $\psi\in\Gamma(L^{\times})$ gives rise to a putative derivative  $(\psi,\psi)\in\Gamma(\Conf(TM,\R^3))$ of a conformal immersion in the regular homotopy class defined by $L$. The problem is that, in general, $(\psi,\psi)$ will not be closed, which is necessary for the existence of a conformal immersion $f\colon M\to\R^3$ satisfying $df=(\psi,\psi)$.  
\begin{Lemma}\label{lem:nonlinearDiracimm}
Let $L\to M$ be a spin bundle over the Riemann surface $M$ and $\psi\in\Gamma(L^{\times})$ a nowhere vanishing section of $L$.
Then the conformal 1-form $(\psi,\psi)\in\Gamma(\Conf(TM,\R^3))$ is closed if and only if $\psi$ solves the non-linear Dirac equation
\begin{equation}\label{eq:nonlinearDirac}
\dbar \psi + \frac{1}{2}HJ \psi (\psi,\psi) =0
\end{equation}
for some real valued function $H\colon M\to\R$.  

\begin{figure}[b]
	\center
	\includegraphics[width=.7\textwidth]{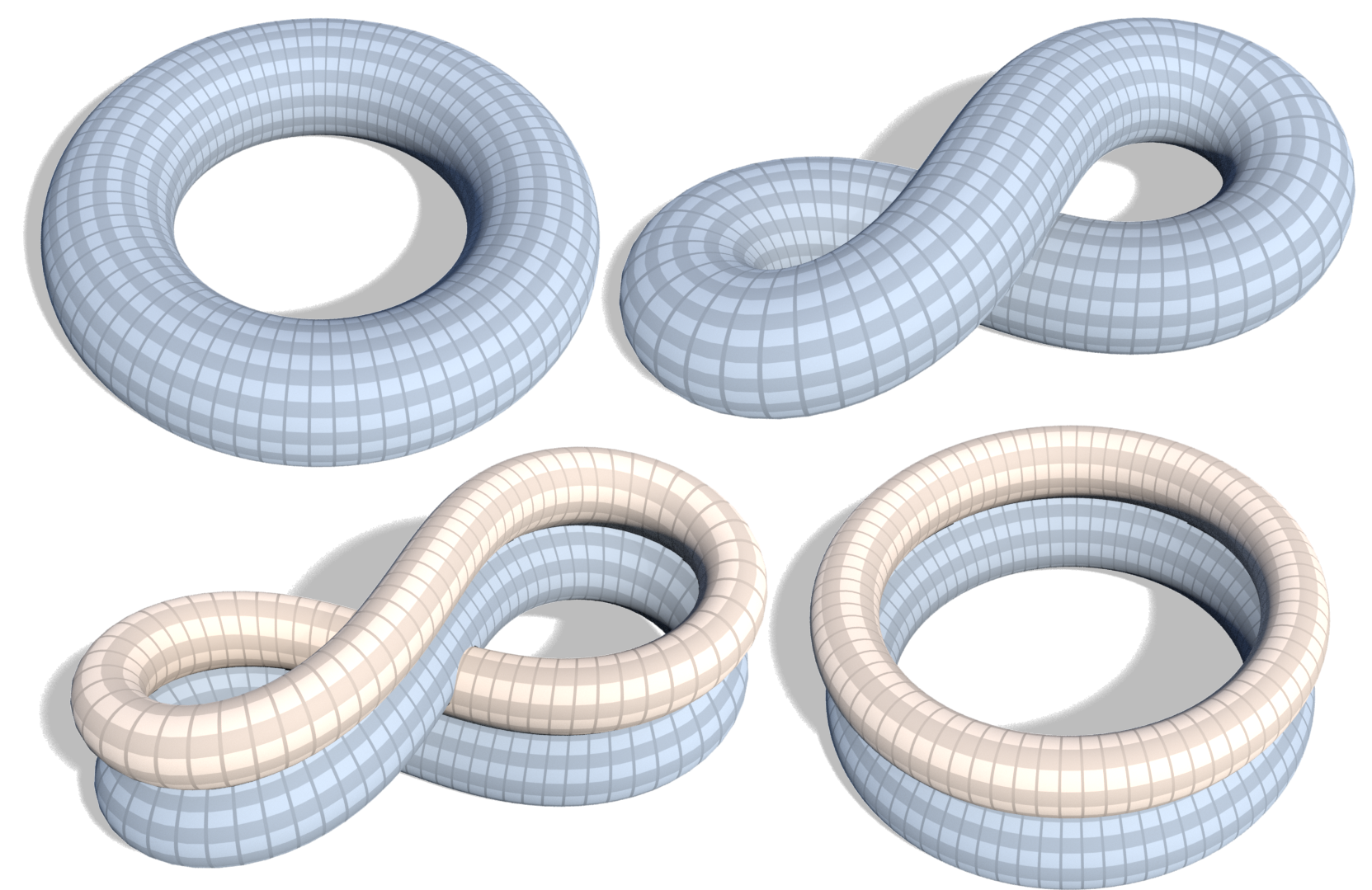}
	\caption{\label{fig:tori-spinstructures}The four regular homotopy classes of a torus.}
\end{figure}

The resulting conformal immersion $f\colon \widetilde{M}\to\R^3$ on the universal cover with translation periods satisfying $df=(\psi,\psi)$  has 
Gauss normal map $N\colon M\to S^2$ given by $J\psi=:-\psi N$. The induced spin bundle $L_f\cong L$ and the induced quaternionic holomorphic spin structure $d^{\,0,1}$ on $L_f$ corresponds, under this isomorphism,  to the quaternionic holomorphic spin structure $D= \dbar+\frac{1}{2}HJ \psi (\psi,\cdot)$. In particular, $H$ is the mean curvature of $f$ calculated with respect to the induced conformal metric $|df|^2=|\psi|^4$.
\end{Lemma}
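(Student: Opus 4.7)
The strategy is to exploit the trivialization of $L$ by the nowhere vanishing section $\psi$ to construct a tailored quaternionic holomorphic structure $D$ on $L$, defined by $D\psi := 0$ and extended via the Leibniz rule: $D(\psi\lambda) := (\psi\,d\lambda)^{0,1}$ for $\lambda \in C^\infty(M,\H)$. By construction $D$ is a quaternionic holomorphic structure. The key claim is that $D$ is a \emph{spin} structure if and only if $d(\psi,\psi) = 0$. Necessity follows by plugging $\varphi_1 = \varphi_2 = \psi$ into~\eqref{eq:D-compatible}. For sufficiency, bilinearly expanding~\eqref{eq:D-compatible} for $\varphi_i = \psi\lambda_i$ using the Leibniz rules for $d$ and the spin pairing reduces the required identity to $\bar\lambda_1\, d(\psi,\psi)\,\lambda_2 = 0$ together with the vanishing of the two wedges $\overline{(d\lambda_1)^{1,0}}\wedge(\psi,\psi)$ and $(\psi,\psi)\wedge(d\lambda_2)^{1,0}$; both vanish by a direct local computation using the type condition $*(\psi,\psi) = N(\psi,\psi)$ coming from~\eqref{eq:J-relate} with $J\psi = -\psi N$.

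Granted $d(\psi,\psi) = 0$, Lemma~\ref{lem:spin-decompose} gives $D = \dbar + U\eta$ for a unique real half-density $U \in \Gamma(|K|)$. Evaluating $D\psi = 0$ and using $\eta\psi = J\psi(\psi,\psi)/|\psi|^2$ from Lemma~\ref{lem:eta} yields
\[
0 = \dbar\psi + \frac{U}{|\psi|^2}\,J\psi(\psi,\psi),
\]
so setting $H := 2U/|\psi|^2$---a real function, since it is the ratio of two real half-densities---recovers~\eqref{eq:nonlinearDirac}. Conversely, given~\eqref{eq:nonlinearDirac}, set $U := \tfrac12 H|\psi|^2$; Lemma~\ref{lem:spin-decompose} declares $\dbar + U\eta$ a quaternionic holomorphic spin structure annihilating $\psi$, and its compatibility~\eqref{eq:D-compatible} evaluated at $\psi$ gives $d(\psi,\psi) = 0$.

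For the second half of the Lemma, closedness of $(\psi,\psi)$ produces $f\colon \widetilde M \to \R^3$ on the universal cover with $df = (\psi,\psi)$ (modulo translation periods of $(\psi,\psi)$ on $M$); since $(\psi,\psi)$ is a nowhere vanishing conformal $1$-form, $f$ is a conformal immersion with $|df|^2 = |\psi|^4$. Using~\eqref{eq:J-relate}, $J\psi = -\psi N$, and $(\psi\nu,\psi) = \bar\nu(\psi,\psi)$ one computes
\[
*df = *(\psi,\psi) = (J\psi,\psi) = (-\psi N, \psi) = -\bar N\,(\psi,\psi) = N\,df,
\]
identifying $N$ with the Gauss normal of $f$ in the convention of Example~\ref{ex:inducedspin}. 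The quaternionic linear map $T\colon L \to L_f$ sending $\psi \mapsto 1 \in \Gamma(L_f^\times)$ satisfies $(T(\psi\lambda), T(\psi\mu))_{L_f} = \bar\lambda\, df\, \mu = (\psi\lambda,\psi\mu)_L$, so $L \cong L_f$ as spin bundles. The induced spin structure $d^{\,0,1}$ on $L_f$ (Example~\ref{ex:inducedD}) annihilates $1$ and therefore corresponds under $T$ to a spin structure on $L$ annihilating $\psi$; by the uniqueness from the first half this must equal $\dbar + \tfrac12 H J\psi(\psi,\cdot)$. Matching the Dirac potentials $\tfrac12 H|\psi|^2 = \tfrac12 H_f|df|$ (from Example~\ref{ex:inducedD}) via $|df| = |\psi|^2$ identifies $H$ with the mean curvature of $f$ in the induced metric.

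The main obstacle is the sufficiency in the first equivalence---verifying~\eqref{eq:D-compatible} for $D(\psi\lambda) := (\psi\,d\lambda)^{0,1}$ when only $d(\psi,\psi) = 0$ is known. This hinges on the two wedge-vanishing identities above, which in turn rely on tracking how the type decomposition of $\H$-valued $1$-forms under the $J$-induced complex structure interacts with the fact that $(\psi,\psi)$ has pure type. Once settled, the remainder is routine bookkeeping with Lemmas~\ref{lem:Diracstructure}, \ref{lem:eta}, and~\ref{lem:spin-decompose} together with Example~\ref{ex:inducedD}.
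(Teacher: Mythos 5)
Your proof is correct and in essence travels the same road as the paper, but you fill in the ``closedness implies Dirac equation'' direction by a somewhat different device. The paper merely states that this converse ``follows from computations similar to the proof of Lemma~\ref{lem:spin-decompose},'' by which it means: decompose $\dbar\psi = \alpha\psi + (U+VJ)\eta\psi$ as in~\eqref{eq:decomposition}, use the identities from the proof of Lemma~\ref{lem:spin-decompose} to write out $d(\psi,\psi) = (\dbar\psi,\psi)+(\psi,\dbar\psi)$, and conclude $d(\psi,\psi)=0$ iff $\alpha=0$ and $V=0$. You instead define an auxiliary quaternionic holomorphic structure $D$ by $D\psi:=0$ (extending by the Leibniz rule), show $D$ is a quaternionic holomorphic {\em spin} structure if and only if $d(\psi,\psi)=0$, and then invoke Lemma~\ref{lem:spin-decompose} to write $D=\dbar+U\eta$ with $U$ a real half-density, which upon evaluation at $\psi$ gives~\eqref{eq:nonlinearDirac}. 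This route requires verifying compatibility~\eqref{eq:D-compatible} for {\em all} sections $\psi\lambda_1$, $\psi\lambda_2$, not just for $\psi$ itself, which is exactly the extra sufficiency step you flag; your reduction to $\bar\lambda_1\,d(\psi,\psi)\,\lambda_2=0$ plus two type-vanishing wedge identities (which indeed hold since $*(\psi,\psi)=N(\psi,\psi)=-(\psi,\psi)N$ forces $*\alpha\wedge *\beta = \alpha\wedge\beta$ to annihilate the cross terms) is correct. What your route buys: the mean curvature $H$ appears intrinsically as the Dirac potential of the unique spin structure annihilating $\psi$, so the uniqueness of $H$ and the later identification with $d^{\,0,1}$ on $L_f$ come for free; the paper's route is more elementary in that it avoids introducing $D$, but leaves a small gap for the reader to fill. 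Both arguments crucially use that Lemma~\ref{lem:spin-decompose} is in fact an equivalence (the ``converse'' direction, that any $\dbar+U\eta$ with real $U$ is a spin structure, is established in its proof though not in its statement), and your second half matches the paper's appeal to Example~\ref{ex:inducedD}.
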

\begin{Remark}
Strictly speaking, Examples~\ref{ex:inducedspin} and ~\ref{ex:inducedD} are stated for a (conformal) immersion $f\colon M\to\R^3$ without periods, but all constructions only use information about the derivative $df$. Whence a (conformal) immersion $f\colon \tilde{M}\to\R^3$ on the universal cover with translation periods induces a spin bundle $L_f\to M$  together with the induced quaternionic holomorphic structure $d^{\,0,1}$ over $M$.
\end{Remark}
\begin{proof}
From Lemma~\ref{lem:spin-decompose} we know that 
\[
D=\dbar+\frac{1}{2}HJ \psi (\psi,\cdot) =\dbar+U\eta \,,
\]
where $U=\tfrac{1}{2}H|\psi|^2$, is a quaternionic holomorphic spin structure. In particular,  $D$ is compatible \eqref{eq:D-compatible} with the spin pairing 
\[
d(\psi,\psi)=(D\psi,\psi)+(\psi,D\psi)\,.
\]
Therefore, if $\psi$ solves the non-linear Dirac equation, which, expressed in terms of $D$,  reads $D\psi=0$, the conformal $1$-form $(\psi,\psi)$ is closed. The converse follows from computations similar to the proof of Lemma~\ref{lem:spin-decompose}.  
Trivializing $L\cong M\times\H$ via the nowhere vanishing section $\psi\in\Gamma(L^{\times})$ provides the isomorphism $L\cong L_f$. 
The remaining statements follow from Example~\ref{ex:inducedD}.
\end{proof}
Given a spin bundle $L\to M$, our goal is to set up a variational problem with parameters
\[
E_{\epsilon}\colon \Gamma(L^{\times})\to\R
\]
on the space of nowhere vanishing sections of $L$, whose minima will give rise to  conformal immersions $f\colon M\to\R^3$.  From
the previous Lemma~\ref{lem:nonlinearDiracimm} we know that a nowhere vanishing section $\psi\in\Gamma(L^{\times})$ gives rise to a conformal immersion (with translation periods) whose derivative satisfies $df=(\psi,\psi)$, provided that $\psi$ solves the non-linear Dirac equation~\eqref{eq:nonlinearDirac}. In other words, $\psi$ has to satisfy
\[
\dbar \psi = U\eta\psi
\]
for some real half-density $U\in\Gamma(|K|)$. In general, since $\psi$ is nowhere vanishing,
$\dbar\psi= Q\psi$ for a $(0,1)$-form $Q\in\Gamma(\bar{K}\End(L))$ with values in the endomorphims of $L$. Decomposing $Q$ into the sum  $Q=Q_{+}+Q_{-}$ of the $J$ commuting part $Q_{+}=\alpha\in\Gamma(\bar{K})$ and the $J$ anti-commuting part $Q_{-}=(U+VJ)\eta$ with $U,V\in\Gamma(|K|)$ real half-densities, we obtain 
\begin{equation}\label{eq:decomposition}
\dbar\psi= \alpha\psi+(U+VJ)\eta\,\psi\,.
\end{equation}
Thus, $\psi$ solves the non-linear Dirac equation, if and only if $\alpha=0$ and $V=0$.  Before continuing, it is worthwhile to discuss the geometric implications of these conditions.
\begin{Remark}
The nowhere vanishing section $\psi\in\Gamma(L^{\times})$  gives rise to the conformal $1$-form $(\psi,\psi)\in\Gamma(\Conf(TM,\R^3))$ which is the putative derivative $df$ of a conformal immersion $f\colon M\to\R^3$.  From Lemma~\ref{lem:nonlinearDiracimm} the candidate for the Gauss normal map $N\colon M\to S^2$ of $f$  is given by $J\psi=:-\psi N$. We can decompose the rank 2 complex bundle $L\to M$ into the sum of complex line bundles, the $\mp N$ eigenspaces 
\[
L_{\pm}=\{\varphi\in L\,;\, J\varphi=\mp\varphi N\}
\]
of the complex structure $J\in\End(L)$. Then $L_{+}\subset L$ is a trivial line bundle via the nowhere vanishing section $\psi\in\Gamma(L_{+})$. Since
\[
*(\psi,\psi)=N(\psi,\psi)=- (\psi,\psi)N
\]
due to \eqref{eq:J-relate}, we have the well-defined complex line bundle isomorphism
\begin{equation}\label{eq:isomorph}
L_{+}^2\to K N^* TS^2\colon \psi^2\mapsto (\psi,\psi)\,.
\end{equation}
The Dirac structure induces complex holomorphic structures $\dbar_{\pm}$ on the summands $L=L_{+}\oplus L_{-}$. Since $*\eta=-J\eta$, the decomposition \eqref{eq:decomposition}
\[
\dbar\psi= \alpha\psi+(U+VJ)\eta\,\psi
\]
is adapted to the splitting $\bar{K}L= \bar{K}L_{+}\oplus \bar{K}L_{-}$. Therefore,  $\alpha=0$ if and only if the isomorphism \eqref{eq:isomorph}  is holomorphic, that is, $\dbar_{+} $ is the trivial holomorphic structure. In other words, $\alpha$ measures the failure of \eqref{eq:isomorph} to be holomorphic.  

Since $U\in\Gamma(|K|)$ is the putative mean curvature half-density, it remains to uncover the geometric meaning of the half density $V\in\Gamma(|K|)$ in \eqref{eq:decomposition}.  The derivative $dN\in\Omega^1(M,N^*TS^2)$  of the candidate Gauss map $N\colon M\to S^2$ can be decomposed into conformal and anti-conformal $\R^3$-valued $1$-forms
\[
dN=dN_{+}+dN_{-}= \frac{2}{|\psi|^2}(U(\psi,\psi) +V*(\psi,\psi)) + q\,.
\]
If $(\psi,\psi)=df$ were closed, than the latter would be the decomposition of the shape operator $dN$ into the trace part $H\,df$ and the trace-free part $q$, the Hopf differential. Therefore, $V=0$ is exactly the condition that the shape operator $dN$ is self-adjoint for one (and hence any) conformal metric on $M$. 

Incidentally, the above discussion of the geometric content of the decomposition \eqref{eq:decomposition} also gives an algorithmic answer to the question ``when is a map $N\colon M\to S^2$ from a {\em compact} Riemann surface $M$ the Gauss normal map of a conformal immersion?''  We first choose a spin bundle $L\to M$ which comes with a complex structure $J\in\Gamma(\End(L))$ compatible \eqref{eq:J-relate} with the Riemann surface structure of $M$. According to Theorem~\ref{thm:spin-homotop}, the spin bundle $L$ encodes one of the $2^{2p}$ regular homotopy classes of the resulting conformal immersion, where $p\in\N$ denotes the genus of $M$. The eigenspace decomposition
\[
L_{\pm}=\{\varphi\in L\,;\, J\varphi=\mp\varphi N\}
\]
defines the two complex line subbundles $L_{\pm}\subset L$ and we need $L_{+}\to M$ to admit a global nowhere vanishing section $\psi\in\Gamma(L_{+}^{\times})$. In other words, $L_{+}$ has to be trivializable which is equivalent to $\deg L_{+}=0$. Due to \eqref{eq:isomorph}
this last is guaranteed if and only if $\deg N= 1-g$, that is, $N$ has the correct degree required by the Gauss-Bonnet Theorem. Moreover, we have seen that $L_{+}$ needs to be holomorphically trivial, which puts $2p=\dim_{\R} \Jac(M)$ real conditions on $N$. Having chosen an $N$ satisfying those conditions, it remains to check whether the half-density $V\in\Gamma(|K|)$ in the decomposition \eqref{eq:decomposition} vanishes.
Note that globally the only remaining freedom is to rescale $\psi$ by a non-vanishing complex number $\lambda\in\C^{\times}$, which has the effect 
of a real scaling and a rotation of the complex half density $U+VJ$.  Provided that such a constant rotation renders this complex half density real, there will be a conformal immersion (with translation periods) $f\colon \tilde{M}\to \R^3$ whose Gauss normal map is given by $N$. 
\end{Remark}
After this brief interlude describing the geometric ramifications of the requirements $\alpha=0$ and $V=0$ in the decomposition \eqref{eq:decomposition}, which guarantee that the conformal $\R^3$-valued $1$-form $(\psi,\psi)\in\Gamma(\Conf(TM,\R^3))$  is closed, we shift towards the variational aspects of those conditions. On a compact Riemann surface $M$  the requirements $\alpha=0$ and $V=0$  are equivalent to the vanishing of the sum of their $L^2$-norms
$
\int_M *\bar{\alpha}\wedge \alpha+ \int_M V^2=0\,.
$
Put differently, our variational problem should be designed to measure, in $L^2$, the failure of $\psi\in\Gamma(L^{\times})$ to solve the non-linear Dirac equation~\eqref{eq:nonlinearDirac}. In the  following lemma we calculate the possible contributions to our functional. 
\begin{Lemma}\label{lem:contributions}
Let $L\to M$ be a spin bundle, $\dbar$ the Dirac structure on $L$, and $\psi\in \Gamma(L^{\times})$ a nowhere vanishing section. Then
we have the following expressions for the components of $\dbar\psi$ in the decomposition \eqref{eq:decomposition}:
\begin{enumerate}
\item
$\langle*\dbar\psi\wedge \dbar\psi\rangle=|\psi|^2(*\bar{\alpha}\wedge\alpha+|U|^2+|V|^2)$,
\item
$\langle*\dbar\psi\wedge \eta\psi\rangle=|\psi|^2 U$, 
\item
$\langle*\dbar\psi\wedge J\eta\psi\rangle=|\psi|^2 V$.
\end{enumerate}
\end{Lemma}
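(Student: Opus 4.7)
The plan is a direct computation based on the decomposition of $\dbar\psi$ from \eqref{eq:decomposition} together with an orthogonality argument that kills all cross terms in the pairing $\langle *\cdot\wedge\cdot\rangle$.

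First I would trivialize $L$ through the nowhere vanishing section $\psi$: every section of $L$ writes uniquely as $\psi\lambda$ with $\lambda\in C^\infty(M,\H)$, and polarizing the definition $|\psi|^2=|(\psi,\psi)|$ yields the pointwise identity $\langle\psi\lambda_1,\psi\lambda_2\rangle=\Re(\bar\lambda_1\lambda_2)\,|\psi|^2$. Set $\omega:=(\psi,\psi)$ and let $N\colon M\to S^2$ be determined by $J\psi=\psi N$; then the conformality of $\omega$ is encoded in $\omega N+N\omega=0$ and $*\omega=\omega N$. Writing $\alpha=\beta+*\beta\,J$ for the unique real $1$-form $\beta$ as in the proof of Lemma~\ref{lem:spin-decompose}, the three summands of $\dbar\psi=\alpha\psi+U\eta\psi+VJ\eta\psi$ acquire the concrete form
\[
\alpha\psi=\psi(\beta+N*\beta),\qquad U\eta\psi=\psi\,\frac{UN\omega}{|\psi|^2},\qquad VJ\eta\psi=-\psi\,\frac{V\omega}{|\psi|^2}.
\]

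Next I would establish that these three summands are mutually orthogonal with respect to $\langle *\cdot\wedge\cdot\rangle$, so that every cross term in the expansions vanishes. In each case, the pairing evaluated on an adapted oriented orthonormal frame $(\partial_x,\partial_y)$ reduces, via the formula of Step~1, to $\Re(q)$ for $q$ a linear combination of the imaginary units $\ii,\jj,\kk$. This is in direct parallel with the identities used in the proof of Lemma~\ref{lem:spin-decompose}: the vanishing of the $\alpha$--$\eta$ and $\alpha$--$J\eta$ cross terms comes from $\Re(\bar\beta\,N\omega)=0$ (the product lies in $\Im\H$), while the $\eta$--$J\eta$ cross term vanishes because the anticommutation $\omega N+N\omega=0$ combined with the antisymmetry of the wedge makes the two evaluations $\langle\cdot(\partial_x),\cdot(\partial_y)\rangle$ and $\langle\cdot(\partial_y),\cdot(\partial_x)\rangle$ cancel exactly.

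Finally I would evaluate each diagonal pairing. For the $\alpha$-part, expand $|\beta+N*\beta|_\H^2$ using $|a+Nb|_\H^2=a^2+b^2$ for $a,b\in\R$; summing over the frame reproduces $2(\beta_x^2+\beta_y^2)\,dx\wedge dy=*\bar\alpha\wedge\alpha$, times $|\psi|^2$. For the $\eta$- and $J\eta$-parts, use $|N\omega|_\H=|\omega|_\H$ together with the convention $|\omega|^2=\omega\wedge *\omega$ for the half-density $|\cdot|$ to conclude
\[
\langle *\eta\psi\wedge\eta\psi\rangle=|\psi|^2=\langle *J\eta\psi\wedge J\eta\psi\rangle.
\]
Multiplying by $U^2$ respectively $V^2$ and combining with the $\alpha$-contribution gives assertion (i); substituting $\dbar\psi=\alpha\psi+U\eta\psi+VJ\eta\psi$ into $\langle *\dbar\psi\wedge\eta\psi\rangle$ and $\langle *\dbar\psi\wedge J\eta\psi\rangle$ and invoking orthogonality isolates the $U$-term respectively the $V$-term, yielding (ii) and (iii).

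The main obstacle is bookkeeping. The objects $\alpha\psi$, $\eta\psi$, $J\eta\psi$ live in $\bar KL$ with different $|K|$-weights, the pairing $\langle\cdot,\cdot\rangle$ is $|K|$-valued, and one must track how the $\H$-valued wedge identifies coherently with the scalar $2$-form $*\bar\alpha\wedge\alpha$ and with the half-densities $U,V$. Choosing a local conformal coordinate $z=x+iy$ and an oriented $\Im\H$-frame for $\omega$ (so that $\omega=\lambda(\ii\,dx+\jj\,dy)$ and $N=-\kk$) turns each of the three identities into an elementary scalar quaternionic check, which is the cleanest route I see to the result.
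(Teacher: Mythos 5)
Your proposal is correct and follows essentially the same route as the paper: trivialize $L$ by $\psi$, use the polarization identity $\langle\psi\lambda_1,\psi\lambda_2\rangle=\Re(\bar\lambda_1\lambda_2)\,|\psi|^2$, write $\alpha=\beta+*\beta J$, and verify that the cross terms in the expansion of $\dbar\psi=\alpha\psi+U\eta\psi+VJ\eta\psi$ vanish (via real-imaginary orthogonality and $\Re(\omega\wedge\omega)=0$) while the diagonal terms give $*\bar\alpha\wedge\alpha$, $U^2$, and $V^2$. The only cosmetic difference is that you propose fixing a local conformal coordinate and an $\Im\H$-frame for $\omega$ to reduce to scalar quaternion checks, whereas the paper works frame-free with the same identities.
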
 
\begin{proof} The real half-density valued inner product \eqref{eq:anglebrac} on the quaternionic line bundle $L$ can always be seen as the real part of a quaternionic Hermitian symmetric inner product. Thus, we have 
\[
\quad \langle \psi\lambda, \psi\mu\rangle=\Re(\bar{\lambda}\mu)|\psi|^2
\]
for $\lambda,\mu\in\H$ and $\psi,\varphi\in L$. Moreover, the compatibility~\eqref{eq:J-relate}  of the complex structure $J\in\Gamma(\End(L))$ with the spin pairing implies
\[
\langle \psi,\psi\rangle=\langle J\psi,J\psi\rangle\quad\text{and thus}\quad \langle \psi,J\psi\rangle=0\,.
\]
We therefore also have 
\[
\langle a\psi, b\psi\rangle=\Re(\bar{a}b)|\psi|^2
\]
for $a,b\in\C$. Recall that $\eta\psi=J\psi\frac{\omega}{|\psi|^2}$ with $\omega=(\psi,\psi)\in\Omega^1(M,\R^3)$ so that $\bar{\omega}=-\omega$.
The $(0,1)$-form  $\alpha\in\Gamma(\bar{K})$ can be written as $\alpha=\beta+*\beta J$ for a real $1$-form $\beta\in\Omega^1(M,\R)$.
Applying the above identities, after some calculations we obtain the following results. 
\begin{gather*}
\langle *\alpha\psi\wedge \alpha\psi\rangle=\Re(*\bar{\alpha}\wedge\alpha)|\psi|^2\\
|\psi|^2 \langle *\alpha\psi\wedge \eta\psi\rangle= 2\Re(*\beta\wedge\omega)|\psi|^2 = 0\\
|\psi|^4\langle \eta\psi\wedge \eta\psi\rangle=-\Re(\omega\wedge\omega)|\psi|^2=0\\
|\psi|^4\langle J\eta\psi\wedge J \eta\psi\rangle=-\Re(\omega\wedge\omega)|\psi|^2=0\\
\langle *\eta\psi\wedge \eta\psi\rangle=\frac{1}{|\psi|^4}\Re(*\bar{\omega}\wedge\omega)|\psi|^2=|\psi|^2
  \end{gather*}
In the third and fourth relation we used the fact that the 2-form $\omega\wedge \omega$ takes values in the orthogonal complement in $\R^3$ of the image of the conformal $1$-form $\omega$. In the last relation, we also used the identification $\Lambda^2 TM^*\cong |K|^2$ of $2$-forms with conformal metrics on $M$. Applying those formulas, we deduce
\begin{align*}
\langle *\dbar\psi\wedge\dbar\psi\rangle&=\Re(*\bar{\alpha}\wedge \alpha)|\psi|^2+ U^2\langle *\eta\psi\wedge\eta\psi\rangle+V^2\langle J*\eta\psi\wedge J\eta\psi\rangle\\
&=|\psi|^2(|\alpha|^2+|U|^2+|V|^2)\,,
\end{align*}
which is the first identity of the lemma. The second identity follows from 
\[
\langle*\dbar\psi\wedge \eta\psi\rangle=\langle U*\eta\psi\wedge \eta\psi\rangle = U|\psi|^2
\]
and likewise does the third.
\end{proof}

As we have discussed, a nowhere vanishing section $\psi\in\Gamma(L^{\times})$ gives rise to the closed, $\R^3$ valued $1$-form $(\psi,\psi)$---and thus to a conformal immersion with translation periods---if and only if $
\int_M *\bar{\alpha}\wedge \alpha+ \int_M V^2=0
$. Here the $(0,1)$-form $\alpha\in\Gamma(\bar{K})$ and the real half-density $V\in \Gamma(|K|)$ are the components in the decomposition \eqref{eq:decomposition} of $\dbar\psi$ which, due to the previous lemma, we can express in terms of the section $\psi$.
\begin{Theorem}\label{thm:functional}
Let $L\to M$ be a spin bundle over a compact Riemann surface and denote by $\dbar$ the Dirac structure. For non-negative $\epsilon=(\epsilon_1,\epsilon_2,\epsilon_3)$ the family of functionals 
\[
E_{\epsilon}\colon \Gamma(L^{\times})\to \R
\]
on nowhere vanishing sections of $L$, given by
\[
E_{\epsilon}(\psi)=\epsilon_1\int_M \tfrac{\langle*\dbar\psi\wedge\dbar\psi\rangle}{|\psi|^2}+
(\epsilon_2-\epsilon_1)\int_M\tfrac{{\langle *\dbar\psi\wedge\psi(\psi,\psi)\rangle}^2}{|\psi|^4}+
(\epsilon_3-\epsilon_1)\int_M\tfrac{{\langle *\dbar\psi\wedge J\psi(\psi,\psi)\rangle}^2}{|\psi|^4}\,
\]
is well-defined on the Riemann surface $M$ and invariant under constant, non-zero scalings of $\psi$. In particular, one could constrain the functional to the $L^4$-sphere of sections satisfying $\int_M |\psi|^4=1$. For $\epsilon_3=0$ and arbitrary $\epsilon_1,\epsilon _2>0$ the functional assumes its minimum value zero at a section $\psi\in\Gamma(L^{\times})$, which gives rise to a conformal immersion (with translation periods) $f\colon \tilde{M}\to \R^3$ satisfying $df=(\psi,\psi)$ in the prescribed regular homotopy class given by the spin bundle $L$. 
\end{Theorem}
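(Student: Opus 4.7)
My plan is to split the theorem into its three distinct claims---well-definedness on the Riemann surface (conformal invariance), invariance under constant scalings $\psi\mapsto\psi\lambda$ with $\lambda\in\H^\times$, and the vanishing of the minimum when $\epsilon_3=0$, $\epsilon_1,\epsilon_2>0$---and to reduce all of them, via Lemma~\ref{lem:contributions}, to the intrinsic decomposition~\eqref{eq:decomposition}, $\dbar\psi=\alpha\psi+(U+VJ)\eta\psi$. The key supplementary observations are the pointwise identities
\[
J\psi(\psi,\psi)=|\psi|^2\,\eta\psi,\qquad \psi(\psi,\psi)=-|\psi|^2\,J\eta\psi,
\]
which follow at once from the definition $\eta=J\psi(\psi,\cdot)/|\psi|^2$ in Lemma~\ref{lem:eta} together with $J^2=-\Id$ and the quaternionic linearity of $J$ on $L$.

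For well-definedness, Lemma~\ref{lem:contributions}(i) directly gives $\langle *\dbar\psi\wedge\dbar\psi\rangle/|\psi|^2=|\alpha|^2+U^2+V^2$, a genuine 2-form on $M$. Substituting the above two identities into the remaining integrands and applying parts (ii) and (iii) of Lemma~\ref{lem:contributions} yields
\[
\langle *\dbar\psi\wedge\psi(\psi,\psi)\rangle=-|\psi|^4 V,\qquad \langle *\dbar\psi\wedge J\psi(\psi,\psi)\rangle=|\psi|^4 U,
\]
so that the two squared integrands divided by $|\psi|^4$ reduce to the half-densities squared $V^2$ and $U^2$, that is, to genuine 2-forms on $M$ with all $|K|$-factors correctly canceling. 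Invariance under $\psi\mapsto\psi\lambda$ for constant $\lambda\in\H^\times$ is then immediate: $\dbar(\psi\lambda)=(\dbar\psi)\lambda$, the section $\eta$ is unchanged by Lemma~\ref{lem:eta}, and a direct substitution shows that $\alpha$, $U$, $V$ are each preserved by the scaling, whence every reduced integrand is invariant.

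Combining these reductions, the functional takes the intrinsic form
\[
E_\epsilon(\psi)=\int_M\bigl(\epsilon_1\,|\alpha|^2+\epsilon_3\,U^2+\epsilon_2\,V^2\bigr),
\]
manifestly non-negative when all $\epsilon_i\geq 0$. For $\epsilon_3=0$ and $\epsilon_1,\epsilon_2>0$, the equality $E_\epsilon(\psi)=0$ is equivalent to $\alpha=0$ and $V=0$, which in turn is equivalent to $\dbar\psi=U\eta\psi=\tfrac12 H\,J\psi(\psi,\psi)$ with $H:=2U/|\psi|^2$, i.e.\ to the non-linear Dirac equation~\eqref{eq:nonlinearDirac}. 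By Lemma~\ref{lem:nonlinearDiracimm}, such a section $\psi$ integrates via $df=(\psi,\psi)$ to a conformal immersion $f\colon\tilde M\to\R^3$ on the universal cover with translation periods, lying in the regular homotopy class encoded by the spin bundle $L$. Existence of at least one section realizing the minimum value zero is supplied by the Garsia--Ruedy theorem~\cite{Garsia:1962:ASES,Ruedy:1971:EORS}: a smooth conformal immersion in the prescribed regular homotopy class produces, through $L_f\cong L$ and Example~\ref{ex:inducedD}, a section $\psi\in\Gamma(L^\times)$ with $\alpha=V=0$.

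The technically delicate step is the second paragraph---the half-density bookkeeping that turns the squared $|K|$-valued numerators $\langle *\dbar\psi\wedge\psi(\psi,\psi)\rangle^2/|\psi|^4$ and its counterpart into the canonical 2-forms $V^2$ and $U^2$ on $M$, independently of any choice of conformal metric. Once this identification is clean, every remaining step is a direct appeal to Lemma~\ref{lem:contributions}, Lemma~\ref{lem:nonlinearDiracimm}, or the classical existence of smooth conformal immersions.
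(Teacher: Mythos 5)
Your route is the paper's intended one: the paper's own ``proof'' is a one-line appeal to Lemma~\ref{lem:contributions}, and you correctly fill in the bridge identities $J\psi(\psi,\psi)=|\psi|^2\eta\psi$ and $\psi(\psi,\psi)=-|\psi|^2 J\eta\psi$ linking the functional's integrands to the quantities $\alpha$, $U$, $V$, then close with Lemma~\ref{lem:nonlinearDiracimm} and the cited existence of smooth conformal immersions in each regular homotopy class. The target reduction $E_\epsilon=\int_M(\epsilon_1|\alpha|^2+\epsilon_3 U^2+\epsilon_2 V^2)$ and the conclusion for $\epsilon_3=0$, $\epsilon_1,\epsilon_2>0$ are exactly right.

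There is, however, a genuine gap in the step you yourself flag as ``technically delicate.'' From your own (correct) computation $\langle*\dbar\psi\wedge J\psi(\psi,\psi)\rangle=|\psi|^4 U$, a $|K|^3$-valued quantity, dividing its square by $|\psi|^4\in\Gamma(|K|^2)$ gives $|\psi|^4 U^2$, which is $|K|^4$-valued---not a $2$-form, hence not integrable---and which scales by $|\lambda|^4$ under $\psi\mapsto\psi\lambda$, contradicting the scale invariance you also assert. The same mismatch occurs in the $V$-term. Asserting that these ratios ``reduce to $V^2$ and $U^2$ with all $|K|$-factors correctly canceling'' is therefore arithmetically false for the denominator $|\psi|^4$ as printed. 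The fix, which a rigorous proof must make explicit rather than gloss over, is that the denominators should be $|\psi|^8$---equivalently, the numerators should be $\langle*\dbar\psi\wedge\eta\psi\rangle$ and $\langle*\dbar\psi\wedge J\eta\psi\rangle$, each of which is already $|K|^2$-valued by Lemma~\ref{lem:contributions}. Once the power is corrected, every remaining step of your argument goes through without change.
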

The proof of the theorem follows immediately from Lemma~\ref{lem:contributions}, in which the various terms of the functional $E_{\epsilon}$ are calculated. It should be noted that, in order to guarantee exactness of the closed $1$-form $(\psi, \psi)$, the functional $E_{\epsilon}$ needs to be augmented by the sum of the squares of the periods $\sum_{\gamma}|\int_{\gamma}(\psi,\psi)|^2$, where $\gamma$ ranges over a basis of the homology group $H_1(M,\Z)$.  This being said, in the sequel we will always assume that our resulting immersions are defined on $M$.

\begin{Remark}
For $\epsilon_3>0$ the functional $E_{\epsilon}$ contains  as a contribution the Willmore energy $\int_M U^2$ of the resulting immersion.
It is therefore tempting to minimize $E_{\epsilon}$ for $\epsilon_3>0$ while taking $\epsilon_3\to 0$. The resulting conformal immersion would then be a constrained Willmore surface, that is, a minimizer for the Willmore energy in a fixed conformal and regular homotopy class. At the moment there is no evidence that this strategy, which involves  $\Gamma$-convergency of our functionals, might be successful. The development of an algorithm based on Theorem~\ref{thm:functional} to carry out experiments is a work in progress. 
\end{Remark}
We finish this section with a discussion of how to adapt our variational approach to find isometric immersions $f\colon M\to\R^3$ of an oriented Riemannian surface $(M,g)$ in a given regular homotopy class described by a spin bundle $L\to M$.  Every  oriented Riemannian surface $(M,g)$ has a unique  Riemann surface structure in which $g$ is a conformal metric.  The induced metric of the immersion $f$, constructed from a nowhere vanishing section $\psi\in\Gamma(L^{\times})$ satisfying the non-linear Dirac equation, is given by 
\[
|df|^2=|(\psi,\psi)|^2=|\psi|^4\,.
\]
Hence, we need to minimize our functional under the constraint $|\psi|^4=g$ in order to find an isometric immersion. 

\begin{figure}
	\center
	\includegraphics[width=.5\textwidth]{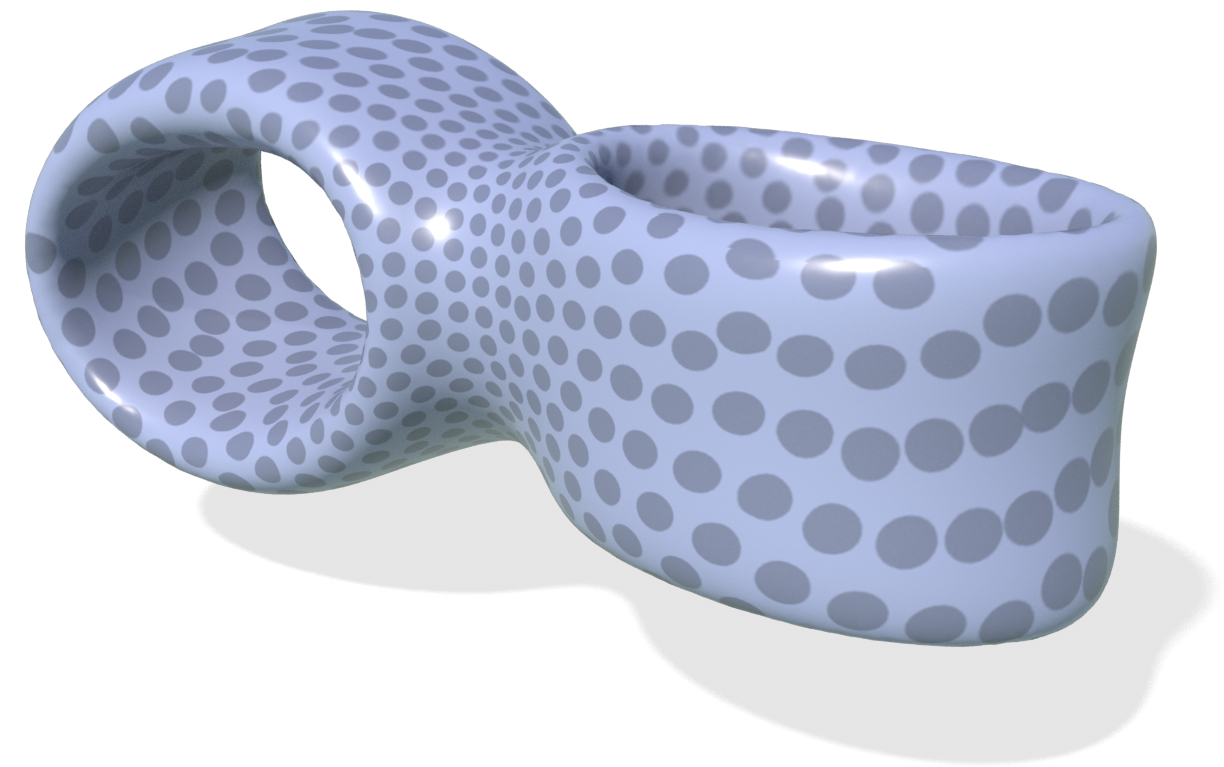}
	\caption{\label{fig:genus2-abeljac}The Abel--Jacobi map of a compact Riemann surface $M$ induces a Riemannian metric on $M$. The Gaussian curvature of this metric vanishes at the Weierstrass points. The picture shows an almost isometric smooth realization of the Abel--Jacobi metric on the abstract genus 2 surface in Figure~\ref{fig:genus2-hyperbolic} computed by the algorithm in \cite{Chern:2018:shape}. The six Weierstrass points lie on the intersection of the surface with its axis of symmetry.}
\end{figure}

\begin{Theorem}\label{thm:isometric}
Let $L\to M$ be a spin bundle over a compact, oriented Riemannian surface and denote by $\dbar$ the Dirac structure on $L$ (where we think of $M$ as a Riemann surface).  For non-negative $\epsilon=(\epsilon_1,\epsilon_2,\epsilon_3)$ the family of functionals 
\[
E_{\epsilon}\colon \Gamma(L^{\times})\to \R
\]
on nowhere vanishing sections of $L$, given by
\[
E_{\epsilon}(\psi)=\epsilon_1\int_M \tfrac{\langle*\dbar\psi\wedge\dbar\psi\rangle}{|\psi|^2}+
(\epsilon_2-\epsilon_1)\int_M\tfrac{{\langle *\dbar\psi\wedge\psi(\psi,\psi)\rangle}^2}{|\psi|^4}+
(\epsilon_3-\epsilon_1)\int_M\tfrac{{\langle *\dbar\psi\wedge J\psi(\psi,\psi)\rangle}^2}{|\psi|^4}\,
\]
subject to the constraint $|\psi|^4=g$, 
is well-defined on the Riemannian  surface $(M,g)$. For $\epsilon_3=0$ and arbitrary $\epsilon_1,\epsilon _2>0$ the functional assumes its minimum value, zero, at a section $\psi\in\Gamma(L^{\times})$ which gives rise to an isometric immersion  $f\colon M\to \R^3$ satisfying $df=(\psi,\psi)$ in the prescribed regular homotopy class given by the spin bundle $L$. 
\end{Theorem}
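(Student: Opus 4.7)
The plan is to deduce Theorem~\ref{thm:isometric} as an almost immediate corollary of Theorem~\ref{thm:functional}, with the isometric constraint $|\psi|^4=g$ playing the role of fixing the induced metric of the resulting immersion to be the prescribed one. I first observe that the conformal class of $g$ endows $M$ with a canonical Riemann surface structure in which $g$ is a conformal metric. Relative to that complex structure, the spin bundle $L\to M$ acquires the unique complex structure $J\in\Gamma(\End(L))$ compatible with its spin pairing via \eqref{eq:J-relate}, and hence carries the Dirac structure $\dbar$ of Lemma~\ref{lem:Diracstructure}. Consequently, every term in the integrand of $E_{\epsilon}$ is defined purely in terms of the Riemann surface structure, so by Theorem~\ref{thm:functional} the functional is well-defined on $\Gamma(L^{\times})$.

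Next I would verify that the constraint is meaningful. Since $|\psi|^2\in\Gamma(|K|)$ by the very definition \eqref{eq:quadraticform} of the half-density valued quadratic form, the quantity $|\psi|^4$ lies in $\Gamma(|K|^2)\cong\Gamma(\Lambda^2 TM^*)$, which is precisely the space in which the Riemannian area form of $g$ lives. Thus $|\psi|^4=g$ is a sensible constraint and is equivalent to fixing $|\psi|^2=\sqrt{g}$ pointwise as half-densities; in particular, it forces $\psi$ off the zero section everywhere. Note that this constraint breaks the $\C^{\times}$-scaling invariance of $E_\epsilon$ up to a global phase, but since only the absolute value of $\psi$ is constrained, the variational problem remains nontrivial.

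With these pieces in place, the proof proper is an application of Theorem~\ref{thm:functional} together with Lemma~\ref{lem:nonlinearDiracimm}. For $\epsilon_3=0$ and $\epsilon_1,\epsilon_2>0$, the computation of Lemma~\ref{lem:contributions} shows that $E_\epsilon(\psi)$ is a non-negative combination of the $L^2$-norms of the components $\alpha\in\Gamma(\bar K)$ and $V\in\Gamma(|K|)$ in the decomposition \eqref{eq:decomposition} of $\dbar\psi$. The infimum is therefore zero, and if attained at some $\psi\in\Gamma(L^{\times})$ satisfying the constraint, it forces $\alpha=0$ and $V=0$. By Lemma~\ref{lem:nonlinearDiracimm} this is equivalent to $\psi$ solving the non-linear Dirac equation \eqref{eq:nonlinearDirac}, yielding a conformal immersion $f\colon\widetilde M\to\R^3$ (with translation periods, which as in the remark following Theorem~\ref{thm:functional} can be suppressed by adding the period-penalty term and are assumed here to vanish) in the regular homotopy class encoded by $L$, with $df=(\psi,\psi)$. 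The induced metric of $f$ is then $|df|^2=|(\psi,\psi)|^2=|\psi|^4=g$, so $f$ is isometric, completing the argument.

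The main obstacle, and the reason this statement is substantive rather than tautological, is of course the existence of the minimizer realizing $E_\epsilon(\psi)=0$ under the isometric constraint. Smooth isometric immersions need not exist (cf.\ the Hilbert-type obstructions mentioned in the introduction), so the theorem is structural: it identifies the vanishing of $E_\epsilon$ on the constraint submanifold with the isometric immersion problem in the prescribed regular homotopy class, rather than asserting unconditional existence. Consequently, no analytic compactness or regularity argument is required here; the genuine existence question is the conjecture discussed in the introduction and is left for the numerical investigations of \cite{Chern:2018:shape}.
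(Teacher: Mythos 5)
Your proof is correct and matches the paper's (essentially unwritten) argument: the paper simply observes that $(M,g)$ has a unique Riemann surface structure in which $g$ is conformal, that the induced metric of the conformal immersion from a Dirac solution is $|df|^2=|\psi|^4$, and that imposing $|\psi|^4=g$ therefore converts Theorem~\ref{thm:functional} into the isometric statement. Your additional remark that the theorem is a structural characterization rather than an unconditional existence claim correctly reflects the paper's own caveat that generic metrics admit no smooth isometric immersion.
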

For a generic Riemannian surface $(M,g)$ there will not exist a smooth isometric immersion into $\R^3$, even though there always is a $C^1$ isometric immersion \cite{Nash:1954:CII, Kuiper:1955:CII}. The methods to construct $C^1$ isometric immersions result in surfaces in $\R^3$ that do not reflect the intrinsic geometry of $(M,g)$ well (see Figure~\ref{fig:C1-flattorus}). On the other hand, minimizing $E_{\epsilon}$ for non-zero $\epsilon_3>0$, that is, with the Willmore energy $\int_M |U|^2$ turned on as a  contribution to the functional, we expect the limiting isometric immersion as $\epsilon_3\to 0$ to have small Willmore energy and thus avoid excessive creasing. This has indeed been carried out experimentally with an algorithm based on Theorem~\ref{thm:isometric}, which is detailed in~~\cite{Chern:2018:shape}. These experiments give some credence to our conjecture, that there should be a piecewise smooth isometric immersion of any Riemannian surface $(M,g)$ in a given regular homotopy class. Again, a theoretical analysis of this conjecture would involve an understanding of the $\Gamma$-convergency properties of our family of functionals $E_{\epsilon}$.
\bibliographystyle{acm}

\bibliography{isometric_title_case}

\end{document}